\newtheorem{theorem}{Theorem}[section]
\newtheorem{lemma}[theorem]{Lemma}
\newtheorem{corollary}[theorem]{Corollary}
\newtheorem{proposition}[theorem]{Proposition}
\theoremstyle{definition}
\newtheorem{definition}[theorem]{Definition}
\theoremstyle{remark}
\newtheorem{remark}[theorem]{Remark}
\numberwithin{equation}{section}
\newcommand{\eps}{\epsilon}
\newcommand{\Var}{\text{Var}}
\newcommand{\PF}{\mathcal{L}}
\newcommand{\Id}{\textrm{Id}}
\begin{document}
\title{A Rigorous Computational Approach to Linear Response}
\author{Wael Bahsoun}
\address{Department of Mathematical Sciences, Loughborough University,
Loughborough, Leicestershire, LE11 3TU, UK}
\email{W.Bahsoun@lboro.ac.uk}
\author{Stefano Galatolo}
\address{Dipartimento di Matematica, Universita di Pisa, Via Buonarroti 1, Pisa - Italy}
\email{galatolo@dm.unipi.it}
\author{Isaia Nisoli}
\address{Instituto de Matematica - UFRJ Av. Athos da Silveira Ramos 149, Centro de Tecnologia - Bloco C Cidade Universitaria - Ilha do Fund\~ao. Caixa Postal 68530 21941-909 Rio de Janeiro - RJ - Brasil}
\email{nisoli@im.ufrj.br}
\author{Xiaolong Niu}
\address{Department of Mathematical Sciences, Loughborough University,
Loughborough, Leicestershire, LE11 3TU, UK}
\email{X.Niu@lboro.ac.uk}
\subjclass{Primary 37A05, 37E05}
\date{\today }
\keywords{Linear Response, Transfer Operators, Rigorous Approximations}

\pagestyle{myheadings} 
\markboth{Linear Response}{W.
Bahsoun, S. Galatolo, I. Nisoli, X. Niu}



\thanks{This work was mainly conducted during a visit of SG to Loughborough University. WB and SG would like to thank The Leverhulme Trust for supporting mutual research visits through the Network Grant IN-2014-021. SG thanks the Department of Mathematical Sciences at Loughborough University for hospitality. WB thanks Dipartimento di Matematica, Universita di Pisa. The research of SG and IN is partially supported by EU Marie-Curie IRSES ``Brazilian-European partnership in Dynamical Systems" (FP7-PEOPLE-2012-IRSES 318999 BREUDS). IN would like to thank the Department of Mathematics at Uppsala University and the support of the KAW grant 2013.0315}

\begin{abstract}
We present a general setting in which the formula describing the linear response of the physical measure of a perturbed system can be obtained. In this general setting we obtain an algorithm to rigorously compute the linear response. We apply our results to expanding circle maps. In particular, we present examples where we compute, up to a pre-specified error in the $L^{\infty}$-norm, the response of expanding circle maps under stochastic and deterministic perturbations. Moreover, we present an example where we compute, up to a pre-specified error in the $L^1$-norm, the response of the intermittent family at the boundary; i.e., when the unperturbed system is the doubling map. 
\end{abstract}

\maketitle

\section{Introduction}
A question of central interest from both theoretical and applied points of view in dynamical systems is the following: given a deterministic dynamical system that admits a Sinai-Ruelle-Bowen (SRB) measure, how does the SRB measure change if the original system gets perturbed, perhaps randomly? It is known that in certain situations the SRB measure changes smoothly and a formula of such a ``derivative" can be obtained \cite{Ba1, BaSma, D, GL, KP, R}. This is called the Linear Response formula. We refer to \cite{Ba2} for a recent survey about this area of research and to the most recent articles on linear response for intermittent maps \cite{BahSau,BT,K}. From a rigorous computational point of view there are no results in the literature that approximate the response of an SRB measure up to a pre-specified error in a suitable topology.

\bigskip

Our goal in this paper is to pioneer this direction of research and to provide tools to investigate the changes in the statistical properties of families of systems. Applications may range from the identification of tipping points in the statistical behavior of systems studied in applications, such as the ones considered in \cite{Lu}, to checking whether a family of systems has decreasing or increasing entropy, see for example the problems considered in \cite{CT} and their relation to number theory.

\bigskip

Our computational approach is based on finding a suitable finite rank approximation of the transfer operator associated with the original system. Such techniques have proved to be computationally robust and to be successful when approximating SRB measures of uniformly expanding systems \cite{BB, GN, Li1, M2}, (piecewise) uniformly hyperbolic systems \cite{Froy1, GN0}, and one-dimensional non-uniformly expanding maps \cite{BBD, GN, RM}. It has also proved to be a successful approach in approximating spectral data \cite{B, DJ, Froy2, Froy3, GNS, Li1} and limiting distributions of dynamical systems \cite{B1}. 

\bigskip

In this paper we show that suitable discretization schemes can be used to approximate linear response. The problem that we face in our rigorous approximation is two-fold. The first is functional analytic. In particular, we need to find suitable discretization schemes that preserve the regularity of the function space(s) where the transfer operator acts, and which can approximate the original transfer operator. The second is computational. In particular, the computational approach should be amenable to tracking all the round-off errors made by the computer.
 
\bigskip 

In Section \ref{sec1} we present a general setting in which the formula corresponding to the linear response can be obtained.
In this section we also show how the formula of such derivative can be rigorously computed using a computer.
In Section \ref{S3} we show how the algorithm can be implemented in the case of circle expanding maps.
In particular we find suitable discretization schemes and suitable Banach spaces achieving the goal for such maps.  
In Section \ref{noise} we apply our results to stochastic perturbations of expanding circle maps and we present an example where we compute, up to a pre-specified error  in the $C^0$ topology, the linear response of an expanding circle map under stochastic perturbations. In Section \ref{sec:deterministic}  we apply our results to a deterministic perturbation of an expanding circle map. In this example the exact response can be computed analytically. Thus, a comparison between the exact response and the computed one can be done. In Section \ref{appendix I} we present an example where we compute, up to a pre-specified error in the $L^1$-norm, the response of the intermittent family at the boundary; i.e., when the unperturbed system is the doubling map.  
Section \ref{appendix II} is an appendix that includes proofs and tools used in the computations in the examples of Sections \ref{noise} and \ref{sec:deterministic}.

\section{A general framework for the linear response \label{sec1}}  Let $X$ be a compact manifold with boundary. $X$ is the space on which we consider certain dynamics that we are going to slightly perturb and study the change of stationary measures after perturbation. The dynamics is described by the action of some positive transfer operator acting on some space of regular Borel measures on $X$. 
In this section we present a general setting in which the formula corresponding to the derivative of a fixed point\footnote{In applications to dynamical systems, such a fixed point corresponds to the density of an absolutely continuous invariant measure, or in general to a physical invariant measure.} of a family of such positive
operators $\mathcal{L}_{\epsilon }$ can be obtained\footnote{The differentiation is done with respect to the variable $\eps$ in a suitable norm. This will be clear in the statement of Proposition \ref{prop1} below.}.
We consider the action of the operators on three Banach spaces $ B_{ss} \subseteq B_s \subseteq B_w$ which are subsets of the space of finite  signed  Borel measures on $X$ and equipped with norms $||\cdot||_{w},||\cdot||_{s},||\cdot||_{ss}$ respectively, such that $||\cdot||_{w}\leq
||\cdot||_{s}\leq ||\cdot||_{ss}$. We suppose that $\mathcal{L}_{\epsilon },$ $\epsilon \geq 0,$ maps probability measures to probability measures and has a unique fixed point, which is a probability measure, $h_{\epsilon }\in B_{ss}.$ Let $\mathcal{L:=L}_{0}$ be the
unperturbed operator and $h\in B_{ss}$ be its fixed probability measure. Let $%
V_{s}^{0}=\{v\in B_{s},v(X)=0\},$ $V_{w}^{0}=\{v\in B_{w},v(X)=0\}.$ We assume that $V_s^0$ is closed in $B_s$.

\bigskip

The following proposition is essentially proved in \cite{Li2}. Since we adapted the assumptions to a general setting we include a proof.
\begin{proposition}
\label{prop1} Suppose that the following assumptions hold:
\begin{enumerate}
\item The norms $||\mathcal{L}^{k}||_{B_{w}\rightarrow B_{w}}$ and $||%
\mathcal{L}_{\epsilon }^{k}||_{B_{w}\rightarrow B_{w}}$ are uniformly bounded with respect to $k$ and $\epsilon >0$.
\item $\mathcal{L}_{\epsilon }$ is a perturbation of $\mathcal{L}$ in the
following sense
\begin{equation}
||\mathcal{L}_{\epsilon }-\mathcal{L}||_{B_{s}\rightarrow B_{w}}\leq
C\epsilon .  \label{approx}
\end{equation}
\item The operators $\mathcal{L}_{\epsilon }$, $\eps\ge0$, have uniform rate of contraction on $%
V_{s}^{0}$: there are $C_{1}>0$, $0<\rho <1$, such that 
\begin{equation}
||\mathcal{L}_{\epsilon }^{n}||_{V_{s}^{0}\rightarrow B_{s}}\leq C_{1}\rho
^{n}.  \label{unicontr}
\end{equation}
\item There is an operator $\mathcal{\hat{L}}:B_{ss}\to B_{s}$ such
that
\begin{equation}
\lim_{\epsilon \rightarrow 0}||\epsilon ^{-1}(\mathcal{L}_{\epsilon}-\mathcal{L})h-\mathcal{\hat{L}}h||_{s}=0 .  \label{Lhat} 
\end{equation}%
\end{enumerate}
Let 
\begin{equation*}
\hat{h}=(\Id-\mathcal{L})^{-1}\hat{\mathcal{L}}h.
\end{equation*}%
Then%
\begin{equation*}
\lim_{\epsilon \rightarrow 0}||\epsilon ^{-1}(h_\epsilon-h)-\hat{h}%
||_{w}=0;
\end{equation*}%
i.e. $\hat{h}$ represents the derivative of $h_{\epsilon }$ with respect to $\epsilon $.
\end{proposition}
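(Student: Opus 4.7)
My plan is to start from the two fixed-point equations $\mathcal{L}h=h$ and $\mathcal{L}_\epsilon h_\epsilon=h_\epsilon$ and derive algebraic identities that can be inverted using the uniform contraction of assumption (3). A direct computation yields
\begin{equation*}
(\Id-\mathcal{L})(h-h_\epsilon)=(\mathcal{L}-\mathcal{L}_\epsilon)h_\epsilon,\qquad(\Id-\mathcal{L}_\epsilon)(h-h_\epsilon)=(\mathcal{L}-\mathcal{L}_\epsilon)h.
\end{equation*}
Since $h$ and $h_\epsilon$ are probability measures and both operators preserve total mass, $h-h_\epsilon\in V_s^0$ and the right-hand sides lie in $V_w^0$.

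The first intermediate step is the statistical-stability bound $||h-h_\epsilon||_s=O(\epsilon)$. Assumption (4) gives $(\mathcal{L}-\mathcal{L}_\epsilon)h=\epsilon\,\hat{\mathcal{L}}h+o(\epsilon)$ in the $B_s$ norm, so $(\mathcal{L}-\mathcal{L}_\epsilon)h$ lies in $V_s^0$ with $B_s$-norm bounded by $C\epsilon$ for small $\epsilon$. Inverting $(\Id-\mathcal{L}_\epsilon)$ on $V_s^0$ via the Neumann series $\sum_n\mathcal{L}_\epsilon^n$, which converges in $B_s$ uniformly in $\epsilon$ by assumption (3), yields $||h-h_\epsilon||_s\le\frac{C_1}{1-\rho}||(\mathcal{L}-\mathcal{L}_\epsilon)h||_s=O(\epsilon)$.

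Now set $r_\epsilon:=\epsilon^{-1}(h-h_\epsilon)-\hat h$, so that the goal is $||r_\epsilon||_w\to 0$. Dividing the first identity by $\epsilon$ and subtracting $(\Id-\mathcal{L})\hat h=\hat{\mathcal{L}}h$ gives
\begin{equation*}
(\Id-\mathcal{L})r_\epsilon=A_\epsilon+D_\epsilon,\quad A_\epsilon:=\epsilon^{-1}(\mathcal{L}-\mathcal{L}_\epsilon)h-\hat{\mathcal{L}}h,\quad D_\epsilon:=\epsilon^{-1}(\mathcal{L}-\mathcal{L}_\epsilon)(h_\epsilon-h).
\end{equation*}
Assumption (4) yields $||A_\epsilon||_s\to 0$; assumption (2) together with the previous paragraph yields $||D_\epsilon||_w\le C\,||h_\epsilon-h||_s\to 0$. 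Moreover $r_\epsilon\in V_s^0$ and $||r_\epsilon||_s$ is uniformly bounded in $\epsilon$.

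The delicate point I expect to meet is that $D_\epsilon$ is controlled only in the weak norm, so I cannot invert $(\Id-\mathcal{L})$ via its Neumann series on $V_s^0$ directly. The workaround is to iterate the equation $r_\epsilon=\mathcal{L}r_\epsilon+(A_\epsilon+D_\epsilon)$ to obtain, for every $n\ge 1$,
\begin{equation*}
r_\epsilon=\mathcal{L}^n r_\epsilon+\sum_{k=0}^{n-1}\mathcal{L}^k(A_\epsilon+D_\epsilon).
\end{equation*}
In the $B_w$ norm, $||\mathcal{L}^n r_\epsilon||_w\le||\mathcal{L}^n r_\epsilon||_s\le C_1\rho^n\,||r_\epsilon||_s$ by (3), which tends to $0$ as $n\to\infty$ uniformly in $\epsilon$, while the finite sum is bounded by $nM\,(||A_\epsilon||_w+||D_\epsilon||_w)$ with $M$ the uniform bound of assumption (1). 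Given $\delta>0$, I first choose $n$ large enough so that the $\rho^n$ term is below $\delta/2$, then $\epsilon$ small enough, for this fixed $n$, so that the finite sum is also below $\delta/2$; this yields $||r_\epsilon||_w\to 0$, as claimed.
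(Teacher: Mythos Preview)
Your proof is correct, and it takes a genuinely different route from the paper's.

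The paper works with the identity $(\Id-\mathcal{L}_\epsilon)(h_\epsilon-h)=(\mathcal{L}_\epsilon-\mathcal{L})h$, inverts via $(\Id-\mathcal{L}_\epsilon)^{-1}$, and then splits
\[
\epsilon^{-1}(h_\epsilon-h)-\hat h=(\Id-\mathcal{L}_\epsilon)^{-1}\bigl[\epsilon^{-1}(\mathcal{L}_\epsilon-\mathcal{L})h-\hat{\mathcal{L}}h\bigr]+\bigl[(\Id-\mathcal{L}_\epsilon)^{-1}-(\Id-\mathcal{L})^{-1}\bigr]\hat{\mathcal{L}}h.
\]
The second term is the delicate one: the paper handles it by telescoping $\mathcal{L}^k-\mathcal{L}_\epsilon^k$, truncating the Neumann series at level $l=\lceil|\log\epsilon|\rceil$, and balancing the two resulting errors. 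You instead use the \emph{other} identity $(\Id-\mathcal{L})(h-h_\epsilon)=(\mathcal{L}-\mathcal{L}_\epsilon)h_\epsilon$, which puts the fixed operator $\mathcal{L}$ on the left; the price is that the right-hand side now involves $h_\epsilon$, so you first establish $\|h-h_\epsilon\|_s=O(\epsilon)$ (via the same resolvent the paper uses) and then split $h_\epsilon=h+(h_\epsilon-h)$ to obtain your $A_\epsilon+D_\epsilon$ decomposition. After that your finite-iteration ``choose $n$, then $\epsilon$'' argument replaces the paper's resolvent-convergence step entirely.

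What each approach buys: the paper avoids the preliminary stability estimate but pays with the somewhat technical $l=\lceil|\log\epsilon|\rceil$ balancing. Your argument front-loads the strong-norm stability bound (a natural intermediate result in its own right) and is then cleaner in the endgame, since only the fixed operator $\mathcal{L}$ is iterated and no telescoping or $\epsilon$-dependent truncation is needed.
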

\begin{proof}
Notice that since $V_s^0$ is closed in $B_s$ and our operators preserve probability measures, then $\hat{
\mathcal{L}}h\in V_{s}^{0}$. Recall that $h_{\epsilon }=\mathcal{L}
_{\epsilon }h_{\epsilon }$. We have
\begin{equation*}
(\Id-\mathcal{L}_{\epsilon })(h_{\epsilon }-h)=(\mathcal{L}_{\epsilon }-
\mathcal{L})h
\end{equation*}%
and since $\hat{h}=(\Id-\mathcal{L})^{-1}\mathcal{\hat{L}}h$, we obtain
\begin{equation} \label{proof1}
\begin{split}
\lim_{\epsilon \rightarrow 0}||\epsilon ^{-1}(h_{\epsilon }-h)-\hat{h}||_{w}
&=\lim_{\epsilon \rightarrow 0}||\epsilon ^{-1}(\Id-\mathcal{L}_{\epsilon
})^{-1}(\mathcal{L}_{\epsilon }-\mathcal{L})h-(\Id-\mathcal{L})^{-1}\mathcal{
\hat{L}}h||_{w} \\
&\leq \lim_{\epsilon \rightarrow 0}||(\Id-\mathcal{L}_{\epsilon
})^{-1}[\epsilon ^{-1}(\mathcal{L}_{\epsilon }-\mathcal{L})h-\mathcal{\hat{L}}h]||_{w} \\
&\hskip 1cm+\lim_{\eps\to 0}||(\Id-\mathcal{L}_{\epsilon })^{-1}\mathcal{\hat{L}}h-(\Id-\mathcal{L})^{-1}\mathcal{\hat{L}}h||_{w}\\
&:=(I)+(II).
\end{split}
\end{equation}
Notice that by assumption (3), $||(\Id-\mathcal{L}_{\epsilon })^{-1}||_{V_{s}^{0}\rightarrow B_{w}}$ are
uniformly bounded. Moreover, since $\lim_{\epsilon \rightarrow 0}||\epsilon ^{-1}(\mathcal{L}_{\epsilon }-
\mathcal{L})h-\mathcal{\hat{L}}h||_{s}=0,$ we obtain
\begin{equation*}
(I)=\lim_{\epsilon \rightarrow 0}||(\Id-\mathcal{L}_{\epsilon })^{-1}[\epsilon
^{-1}(\mathcal{L}_{\epsilon }-\mathcal{L})h-\mathcal{\hat{L}}h]||_{w}=0.
\end{equation*}
Now we consider $(II)$. By assumption (3), on the space $V_{s}^{0},\ (\Id-\mathcal{L}_{\epsilon })^{-1}=\sum_{0}^{\infty }%
\mathcal{L}_{\epsilon }^{k}$. Notice that by assumptions (2) and (3) we have:%
\begin{eqnarray*}
||\mathcal{L}^{k}-\mathcal{L}_{\epsilon }^{k}||_{V_{s}^{0}\rightarrow
V_{w}^{0}} &\leq &\sum_{j=0}^{k-1}||\mathcal{L}_{\epsilon }^{j}(\mathcal{L}%
_{\epsilon }-\mathcal{L})\mathcal{L}^{k-1-j}||_{V_{s}^{0}\rightarrow
V_{w}^{0}} \\
&\leq &\sup_{j}||\mathcal{L}_{\epsilon }^{j}||_{w}\sum_{j=0}^{k-1}||(%
\mathcal{L}_{\epsilon }-\mathcal{L})\mathcal{L}^{k-1-j}||_{V_{s}^{0}%
\rightarrow V_{w}^{0}} \\
&\leq &C\epsilon \sup_{j}||\mathcal{L}_{\epsilon }^{j}||_{w}\sum_{j=0}^{k-1}||%
\mathcal{L}^{k-1-j}||_{V_{s}^{0}} \\
&\leq &C\epsilon \sup_{j}||\mathcal{L}_{\epsilon }^{j}||_{w}C_{1}\frac{1-\rho
^{k}}{1-\rho }.
\end{eqnarray*}
Consequently,
\begin{equation*}
\begin{split}
&||(\Id-\mathcal{L}_{\epsilon })^{-1}\mathcal{\hat{L}}h-(\Id-\mathcal{L})^{-1}
\mathcal{\hat{L}}h||_{w}\\
 &\leq ||\mathcal{\hat{L}}h||_{s}[\sum_{k=0}^{l-1}||\mathcal{L}^{k}-\mathcal{L}_{\epsilon}^{k}||_{V_{s}^{0}\rightarrow V_{w}^{0}} +\sum_{l}^{\infty }(||\mathcal{L}^{k}||_{V_{s}^{0}\rightarrow V_{w}^{0}}+||\mathcal{L}_{\epsilon }^{k}||_{V_{s}^{0}\rightarrow V_{w}^{0}})] \\
&\hskip 1cm\leq ||\mathcal{\hat{L}}h||_{s}[C l \epsilon \sup ||\mathcal{L}_{\epsilon
}^{j}||_{w}C_{1}\frac{1-\rho ^{l}}{1-\rho }+2C_{1}\rho ^{l}\frac{1}{1-\rho }%
].
\end{split}
\end{equation*}
Choosing $l=\left\lceil |\log \epsilon |\right\rceil$ implies%
\begin{equation*}
(II)=\lim_{\epsilon \rightarrow 0}||(\Id-\mathcal{L}_{\epsilon })^{-1}\mathcal{%
\hat{L}}h-(\Id-\mathcal{L})^{-1}\mathcal{\hat{L}}h||_{w}=0.
\end{equation*}
Hence, $\lim_{\epsilon \rightarrow 0}||\epsilon ^{-1}(h_{\epsilon }-h)-\hat{h}%
||_{w}=0$.
\end{proof}
The function $h \to \hat{\mathcal{L}}h$ depends on the kind of perturbation we
consider (deterministic, stochastic, etc.). In the following, we suppose that $\hat{\mathcal{L}}h$ is computable with a small error in the $B_{w}$
norm. Then we show that this leads to the rigorous computation of $\hat{h}$ in the $
B_{w}$ norm. The computation will be performed by approximating $\mathcal{L}$ with a
finite rank operator $\mathcal{L}_{\eta }$ which can be implemented on a
computer. Let us consider a finite rank discretization%
\begin{equation*}
\Pi _{\eta }:B_{s}\rightarrow W_{\eta },
\end{equation*}%
where $W_{\eta }\subseteq B_{s} $ is a finite dimensional space of measures, such that for $
f\in B_{s}$, 
$$\lim_{\eta \rightarrow 0}||(\Pi _{\eta }-\Id)f||_{w}=0.$$ 
The operator $\mathcal{L}_{\eta }$ is defined as 
\begin{equation*}
\mathcal{L}_{\eta }=\Pi _{\eta }\mathcal{L}\Pi _{\eta }.
\end{equation*}%
Let us denote by $f_{\eta }\in V_{s}^{0}$ a family of approximations of $%
\hat{\mathcal{L}}h$ in the weak norm $||\cdot||_w$.

\begin{theorem} \label{main}Suppose that $\mathcal{L}$ satisfies the assumptions in Proposition \ref{prop1} and :
\begin{enumerate}
\item $||f_{\eta }||_{s}$ are uniformly bounded and $||f_{\eta }-\hat{%
\mathcal{L}}h||_{w}\underset{\eta \rightarrow 0}{\rightarrow }0.$

\item $\mathcal{L}_{\eta }$ is an approximation of $\mathcal{L}$ in the
following sense%
\begin{equation*}
||\mathcal{L}_{\eta }-\mathcal{L}||_{B_{s}\rightarrow B_{w}}\leq C\eta .
\end{equation*}

\item $\exists C>0$ such that for any $n\ge 1$ and $\eta>0$ we have $||\mathcal{L}^n_{\eta }||_s<C$.

 
\end{enumerate}
Then, for any $\tau >0$,  there are $\eta >0$ and $l^{\ast }\in \mathbb{N}$
such that 
\begin{equation*}
||(Id-\mathcal{L})^{-1}\hat{\mathcal{L}}h\ -\sum_{k=0}^{l^{\ast }-1}\mathcal{%
L}_{\eta }^{k}f_{\eta }||_{w}<\tau .
\end{equation*}
\end{theorem}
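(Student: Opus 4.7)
The plan is to imitate the structure of the estimate of term $(II)$ in the proof of Proposition \ref{prop1}, with the discretized operator $\mathcal{L}_\eta$ now playing the role previously played by $\mathcal{L}_\epsilon$, while simultaneously accounting for the replacement of $\hat{\mathcal{L}}h$ by the computable approximation $f_\eta$. Specifically, I would decompose
\begin{equation*}
\Big\|(\Id-\mathcal{L})^{-1}\hat{\mathcal{L}}h - \sum_{k=0}^{l^{\ast}-1}\mathcal{L}_\eta^k f_\eta \Big\|_w \le (A) + (B) + (C),
\end{equation*}
where $(A) = \|\sum_{k\ge l^{\ast}} \mathcal{L}^k \hat{\mathcal{L}}h\|_w$ is the tail-of-Neumann-series truncation error, $(B) = \|\sum_{k=0}^{l^{\ast}-1}(\mathcal{L}^k - \mathcal{L}_\eta^k)\hat{\mathcal{L}}h\|_w$ is the discretization error for the operator, and $(C) = \|\sum_{k=0}^{l^{\ast}-1}\mathcal{L}_\eta^k(\hat{\mathcal{L}}h - f_\eta)\|_w$ is the discretization error for the source vector. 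The order of quantifiers is crucial: I would first choose $l^{\ast}$ large (using only information about the exact operator $\mathcal{L}$) to control $(A)$, and only then select $\eta$ small, depending on this now-fixed $l^{\ast}$, to control $(B)$ and $(C)$.

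Term $(A)$ is immediate from assumption (3) of Proposition \ref{prop1} together with $\|\cdot\|_w \le \|\cdot\|_s$: since $\hat{\mathcal{L}}h \in V_s^0$, each $\mathcal{L}^k \hat{\mathcal{L}}h$ stays in $V_s^0$ with $\|\mathcal{L}^k\hat{\mathcal{L}}h\|_s \le C_1 \rho^k \|\hat{\mathcal{L}}h\|_s$, so the tail is dominated by $C_1(1-\rho)^{-1}\rho^{l^{\ast}}\|\hat{\mathcal{L}}h\|_s$, which is below $\tau/3$ for $l^{\ast}$ large. For $(B)$ I would use the telescoping identity
\begin{equation*}
\mathcal{L}^k - \mathcal{L}_\eta^k = \sum_{j=0}^{k-1}\mathcal{L}_\eta^{\,j}(\mathcal{L}-\mathcal{L}_\eta)\mathcal{L}^{k-1-j},
\end{equation*}
exactly as in the proof of Proposition \ref{prop1}: the inner iterate $\mathcal{L}^{k-1-j}\hat{\mathcal{L}}h$ has uniformly bounded strong norm on $V_s^0$; the middle factor contributes $C\eta$ by assumption (2) of Theorem \ref{main}; and the outer $\mathcal{L}_\eta^{\,j}$ is controlled in the weak norm via assumption (3) of Theorem \ref{main} combined with $\|\cdot\|_w\le\|\cdot\|_s$. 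Summing for $0\le j < k < l^{\ast}$ gives $(B)\le K(l^{\ast})\,\eta$, which is less than $\tau/3$ once $\eta$ is small enough.

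For $(C)$, the same weak-norm control on $\mathcal{L}_\eta^{\,j}$ over the finitely many indices $j<l^{\ast}$, combined with assumption (1) of Theorem \ref{main} ($\|f_\eta-\hat{\mathcal{L}}h\|_w\to 0$), gives a bound of the form $l^{\ast} M \|f_\eta-\hat{\mathcal{L}}h\|_w$, which can again be made smaller than $\tau/3$ by further shrinking $\eta$. Combining the three estimates yields the theorem. The main obstacle is bookkeeping, in two senses: first, ensuring that every constant in the telescoping estimate of $(B)$ depends only on the already-fixed $l^{\ast}$ (and not on $\eta$), so that the limit $\eta\to 0$ is legitimate; second, promoting the strong-norm boundedness of $\mathcal{L}_\eta$ from assumption (3) of Theorem \ref{main} to the weak-norm boundedness actually used in $(B)$ and $(C)$, which is where the embedding $\|\cdot\|_w\le\|\cdot\|_s$ and the uniform weak-norm bound on $\|\mathcal{L}^k\|_{B_w\to B_w}$ from assumption (1) of Proposition \ref{prop1} are genuinely needed.
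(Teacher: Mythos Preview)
Your three-term decomposition is algebraically correct, and term $(A)$ is handled exactly as in the paper. The gap is in $(B)$ and $(C)$: both require a \emph{weak}-norm bound on iterates $\mathcal{L}_\eta^{\,j}$, and no such bound is available from the hypotheses. Assumption (3) of Theorem~\ref{main} gives only $\|\mathcal{L}_\eta\|_{B_s\to B_s}\le M_s$, and the embedding $\|\cdot\|_w\le\|\cdot\|_s$ lets you conclude $\|\mathcal{L}_\eta^{\,j} v\|_w\le M_s^{\,j}\|v\|_s$, not $\le M_s^{\,j}\|v\|_w$. In your telescoping for $(B)$ the vector $v=(\mathcal{L}-\mathcal{L}_\eta)\mathcal{L}^{k-1-j}\hat{\mathcal{L}}h$ has small \emph{weak} norm (of order $\eta$) but its \emph{strong} norm is merely bounded, so the factor $\eta$ is lost and $(B)$ does not tend to zero. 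Likewise in $(C)$ you only know $\|\hat{\mathcal{L}}h-f_\eta\|_w\to 0$, not $\|\hat{\mathcal{L}}h-f_\eta\|_s\to 0$, so $\|\mathcal{L}_\eta^{\,k}(\hat{\mathcal{L}}h-f_\eta)\|_w$ cannot be shown to vanish. Your closing paragraph flags this as ``bookkeeping'' and appeals to assumption (1) of Proposition~\ref{prop1}, but that assumption bounds $\|\mathcal{L}^k\|_{B_w\to B_w}$ and $\|\mathcal{L}_\epsilon^k\|_{B_w\to B_w}$, not $\|\mathcal{L}_\eta^k\|_{B_w\to B_w}$; the discretizations $\mathcal{L}_\eta$ are different objects from the perturbations $\mathcal{L}_\epsilon$.

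The paper avoids this by the mirror-image pairing: it applies the operator difference to $f_\eta$ rather than to $\hat{\mathcal{L}}h$, and uses the reversed telescoping
\[
\mathcal{L}^k-\mathcal{L}_\eta^k=\sum_{j=0}^{k-1}\mathcal{L}^{\,j}(\mathcal{L}-\mathcal{L}_\eta)\mathcal{L}_\eta^{\,k-1-j},
\]
so that the \emph{outer} factor is $\mathcal{L}^{\,j}$ (weak-norm bounded by assumption (1) of Proposition~\ref{prop1}) and the \emph{inner} factor $\mathcal{L}_\eta^{\,k-1-j}f_\eta$ needs only a strong-norm bound (supplied by assumptions (1) and (3) of Theorem~\ref{main}). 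Correspondingly the source-error term becomes $\sum_{k<l^*}\mathcal{L}^k(\hat{\mathcal{L}}h-f_\eta)$, which is controlled by $Ml^*\|\hat{\mathcal{L}}h-f_\eta\|_w$. Swapping your pairing in this way repairs the argument with no further change.
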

\begin{proof}
Notice that $(\Id-\mathcal{L})^{-1}\hat{\mathcal{L}}h$ is well defined
since $\hat{\mathcal{L}}h$ is of zero average. We have
\begin{align}
\sum_{k=0}^{l^{\ast }-1}||(\mathcal{L}^{k}-\mathcal{L}_{\eta }^{k})f_{\eta
}||_{w}& \leq \sum_{k=0}^{l^{\ast }-1}\sum_{j=0}^{k-1}||\mathcal{L}^{j}(%
\mathcal{L}-\mathcal{L}_{\eta })\mathcal{L}_{\eta }^{k-1-j}f_{\eta }||_{w} 
\notag \\
& \leq M\sum_{k=0}^{l^{\ast }-1}\sum_{j=0}^{k-1}||(\mathcal{L}-\mathcal{L}%
_{\eta })\mathcal{L}_{\eta }^{k-1-j}f_{\eta }||_{w}  \notag \\
& \leq M||(\mathcal{L}-\mathcal{L}_{\eta })||_{B_{s}\rightarrow
B_{w}}\cdot \sum_{k=0}^{l^{\ast }-1}\sum_{j=0}^{k-1}||\mathcal{L}_{\eta
}^{k-1-j}f_{\eta }||_{B_{s}},
\end{align}%
where $M=\sup_{k}||\mathcal{L}^{k}||_{B_{w}\rightarrow B_{w}}$. Consequently, we obtain
\begin{equation}
\begin{split}
& ||(Id-\mathcal{L})^{-1}\hat{\mathcal{L}}h-\sum_{k=0}^{l^{\ast }-1}\mathcal{%
L}_{\eta }^{k}f_{\eta }||_{w}=||\sum_{k=0}^{\infty }\mathcal{L}^{k}\hat{%
\mathcal{L}}h-\sum_{k=0}^{l^{\ast }-1}\mathcal{L}_{\eta }^{k}f_{\eta }||_{w}
\\
& \leq ||\sum_{k=l^{\ast }}^{\infty }\mathcal{L}^{k}\hat{\mathcal{L}}%
h||_{w}+||\sum_{k=0}^{l^{\ast }-1}\mathcal{L}^{k}\hat{\mathcal{L}}%
h-\sum_{k=0}^{l^{\ast }-1}\mathcal{L}_{\eta }^{k}f_{\eta }||_{w} \\
& \leq ||\sum_{k=l^{\ast }}^{\infty }\mathcal{L}^{k}\hat{\mathcal{L}}%
h||_{w}+\sum_{k=0}^{l^{\ast }-1}||(\mathcal{L}^{k}-\mathcal{L}_{\eta
}^{k})f_{\eta }||_{w}+\sum_{k=0}^{l^{\ast }-1}||\mathcal{L}^{k}(\hat{%
\mathcal{L}}h-f_{\eta })||_{w} \\
& \leq ||\sum_{k=l^{\ast }}^{\infty }\mathcal{L}^{k}\mathcal{\hat{L}}h||_{w}+M||(%
\mathcal{L}-\mathcal{L}_{\eta })||_{B_{s}\rightarrow B_{w}}\cdot
\sum_{k=0}^{l^{\ast }-1}\sum_{j=0}^{k-1}||\mathcal{L}_{\eta }^{k-1-j}f_{\eta
}||_{s} \\ 
&\hskip 7cm+M~l^{\ast }||\hat{\mathcal{L}}h-f_{\eta }||_{w}.
\end{split}
\label{poop}
\end{equation}
Now, choose $l^{\ast }$ big enough so that $||\sum_{k=l^{\ast }}^{\infty }%
\mathcal{L}^{k}\hat{\mathcal{L}}h||_{w}\leq \frac{\tau }{2}$. Since for each 
$\eta$ $||\mathcal{L}_{\eta }^{k-1-j}f_{\eta }||_{s}$ are
uniformly bounded, by assumptions (2) and (3) we can choose $\eta $ small enough such that%
\begin{equation}
M||(\mathcal{L}-\mathcal{L}_{\eta })||_{B_{s}\rightarrow B_{w}}\cdot
\sum_{k=0}^{l^{\ast }-1}\sum_{j=0}^{k-1}||\mathcal{L}_{\eta }^{k-1-j}f_{\eta
}||_{s}+M~l^{\ast }||\hat{\mathcal{L}}h-f_{\eta }||_{w}<\frac{\tau }{2}.
\label{eqlast}
\end{equation}%
\end{proof}
\begin{remark}\label{comput} For computational purposes it is important to have an algorithm
to find  suitable $l^{\ast }$ and  $\eta $. Let us comment on each summand in
Equation (\ref{poop}):
\begin{enumerate}
\item The first summand of (\ref{poop}), $||\sum_{k=l^{\ast }}^{\infty }%
\mathcal{L}^{k}\mathcal{\hat{L}}h||_{w}$ can be estimated by (\ref{unicontr}%
). However, it is enough to have an estimation on the weak norm. In Subsection \ref{sec:GNS} we will see how to find in systems satisfying a Lasota-Yorke
inequality, constants $C_{3},\rho _{3}$  such
that: $||\mathcal{L}^{k}\mathcal{\hat{L}}h||_{w}\leq C_{3}\rho _{3}^{k}||%
\mathcal{\hat{L}}h||_{s}$. Once the constants are found, we can bound $%
||\sum_{k=l^{\ast }}^{\infty }\mathcal{L}^{k}\mathcal{\hat{L}}h||_{w}\leq 
\frac{C_{2}\rho _{2}^{l^{\ast }}||\mathcal{\hat{L}}f||_{s}}{1-\rho _{2}}$
and find a suitable $l^{\ast }$ to make this summand as small as wanted.

\item For the second summand of (\ref{poop})%
\begin{equation*}
M||(\mathcal{L}-\mathcal{L_\eta})||_{B_{s}\rightarrow B_{w}}\cdot
\sum_{k=0}^{l^{\ast }-1}\sum_{j=0}^{k-1}||\mathcal{L}_{\eta }^{k-1-j}f_{\eta
}||_{s}
\end{equation*}
we need an estimate on $M$ which can be recovered by a Lasota-Yorke
inequality (see Proposition \ref{LYall} ). $||(\mathcal{L}-\mathcal{L}_{\eta })||_{B_{s}\rightarrow B_{w}}$
will be estimated by condition (2) of Theorem \ref{main}. The summands $||\mathcal{L}_{\eta }^{k-1-j}f_{\eta }||_{s}$ can
be approximated by the fact that $\mathcal{L}_{\eta }$ is of finite rank; i.e., by computing the matrix representing it. $||\mathcal{L}_{\eta }^{k-1-j}f_{\eta }||_{s}$ will be
estimated by the computer.

\item For $M~l^{\ast }||\mathcal{\hat{L}}h-f_{\eta }||_{w}$
\ of (\ref{poop}), we have to find a suitable approximation of $\mathcal{\hat{L}%
}h$ such that $||\mathcal{\hat{L}}h-f_{\eta }||_{w}$ is as small as wanted.
Note that this depends on the properties of $\hat{\mathcal{L}}$ and consequently on the kind of perturbations $\mathcal L_{\epsilon}$ that we consider. 
\end{enumerate}
\end{remark}

In the following we will discuss in details how the above results
can be applied to $C^3$ expanding maps of the circle. We also present
examples on how the algorithm outlined in Theorem \ref{main} and Remark
\ref{comput} can be implemented in this setting. The concrete
implementation of the above ideas to expanding maps of the circle
involves spaces of measures having a smooth density. The general framework extends to more general classes of hyperbolic systems,
provided a suitable functional analytic framework is considered (see
\cite{bani} for a recent survey on suitable spaces to be considered for
hyperbolic systems). The implementation to such systems is out of the
scope of the current paper.

\section{Circle  expanding maps and smooth discretizations}\label{S3}

Theorem \ref{main} and Remark \ref{comput}  outline an algorithm for the computation of the linear response.

\begin{enumerate} 

\item First we have to find suitable $f_{\eta}$ approximating $\hat{\mathcal{L}}h$ in the weak norm.

\item Then we can use a suitable discretization $\mathcal{L}_{\eta}$ of the transfer operator (well approximating it as an operator from $\mathcal{B}_s$ to $\mathcal{B}_w$) for the computation of  $l^*$ as in Item (1) of Remark \ref{comput}. 

\item Once found the suitable $l^*$  we compute the result of our approximation procedure as $$\hat{h}_{appr}=\sum_{i=0}^{l^*-1} \mathcal{L}_{\eta}^i f_{\eta}. $$
\end{enumerate} 

If $f_{\eta}$, $\mathcal{L}_{\eta}$,  $l^*$ are well chosen, Theorem \ref{main} ensures that $\hat{h}_{appr}$ is a good approximation of $\hat{h}$
in the weak norm and by Remark \ref{comput} we can explicitly bound the approximation error.
In this section we describe suitable functional spaces and a good approximation $\mathcal{L}_{\eta}$   for the transfer operator 
for expanding circle maps. This gives us an approximation of the linear response in $L^{\infty}$.
The approximation $f_{\eta}$ of $\hat{\mathcal{L}}h$ depends on the kind of perturbation considered.
In the following sections we will discuss two specific kinds of perturbations: deterministic and stochastic ones.

%

\bigskip

Let us consider the space $(\mathbb{T},\mathfrak{B},m)$ where $\mathbb{T}$ is
the unit circle, $\mathfrak{B}$ is Borel $\sigma $-algebra and $m$ is 
Lebesgue measure on $\mathbb{T}$. 
Let $T:\mathbb{T}\rightarrow \mathbb{T}$ be a $C^{3}$ uniformly expanding
circle map; i.e. $\inf_{x\in \mathbb{T}}|D_{x}T|>1$. Let $$\lambda =1/\inf_{x\in \mathbb{T}}|D_{x}T|.$$ 
Without loss of generality we assume that $T$ is orientation preserving. 
The circle map $T$ it is naturally associated to an 
expanding map $[0,1]\to [0,1]$ which will be still denoted by $T$.
We will consider different perturbations of  the transfer operator 
associated to this kind of maps and apply Theorem \ref{main} to compute the linear response $\hat{h}$.  
We will consider the action of the transfer operator on the function spaces 
$B_{w}=C^0([0,1])$, $B_{s}=C^1([0,1])$ and $B_{ss}=C^2([0,1])$. 
We equip the $C^k$ spaces, $k=0,1,2,$ with the usual norms $||f||_{C^k}=\sum_{i\leq k}||f^{(i)}||_{\infty} $.

It is known that such an expanding map  has a $C^2$ invariant density and there 
is an explicit formula for the action of the transfer operator 
associated with $T$ on probability densities (also called  Perron-Frobenius operator, 
see \cite{Ba}) $\mathcal{L}:C^{0}([0,1])\rightarrow C^{0}([0,1])$
\begin{equation}\label{PF}
\mathcal{L}f(x)=\sum_{y\in T^{-1}x}\frac{f(y)}{T^{\prime }(y)}.
\end{equation}

The reason behind working on the closed interval rather than 
the unit circle is that there are some advantages in the 
computer implementation of the discretization 
(the implementation on $[0,1]$ is easier and cleaner than the implementation on the circle). 
We can also consider our function spaces as spaces of smooth functions on the circle 
allowing discontinuities at $0$. In the following when there is no ambiguity we will denote these spaces by $C^0, C^1, C^2$.

\subsection{Basic properties of the transfer operator}\label{subsec:modfuncspace} 
We will  need some additional information on the action of the transfer operator on the space $C^2([0,1])$
to understand better the properties of its invariant density. Let 
\begin{equation*}
J(f)=|f(1)-f(0)|.
\end{equation*}%
Notice that $C^0(\mathbb T)$ is the set of functions $f$ in $C^0([0,1])$ such that
$J(f)=0$. In the next lemma we are going to prove that, indeed, the fixed point of $\mathcal{L}$
is in $C^0(\mathbb{T})$.
\begin{lemma}
\label{Jcontract} We have
\begin{enumerate}
\item $\mathcal{L}$ preserves $C^{k}([0,1])$, $k\in \{1,2\};$
\item $J(\mathcal{L}f)\leq \lambda J(f),$
\item if $h$ is a fixed point of $\mathcal{L}$ in $C^1([0,1])$, then $J(h)=0$.
\end{enumerate}
\end{lemma}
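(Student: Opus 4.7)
The plan is to leverage the explicit inverse-branch representation of $\mathcal{L}$. Since $T$ is an orientation-preserving $C^3$ expanding circle map of degree $d \geq 2$, it has fixed points; I would choose the parametrization of $\mathbb{T}$ as $[0,1]/\{0\sim 1\}$ so that $T(0)=0$. Let $0 = c_1 < c_2 < \cdots < c_d < c_{d+1} = 1$ be the preimages of $0$, and let $y_i : [0,1] \to [c_i, c_{i+1}]$ denote the associated inverse branches; these are $C^3$ diffeomorphisms up to the endpoints because $T$ is $C^3$ with non-vanishing derivative. Then
$$\mathcal{L}f(x) = \sum_{i=1}^{d}\frac{f(y_i(x))}{|T'(y_i(x))|},$$
with the key endpoint identities $y_1(0)=0$, $y_d(1)=1$, and $y_i(1) = c_{i+1} = y_{i+1}(0)$ for $i = 1, \ldots, d-1$.

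For part (1), since $1/|T'|\in C^2$ (as $T\in C^3$) and each $y_i\in C^3([0,1])$, the chain rule shows every summand is in $C^k([0,1])$ for $k\in\{1,2\}$, and the same holds for the finite sum $\mathcal{L}f$.

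For part (2), I would substitute $x=0$ and $x=1$ into the formula and subtract. The identities $y_i(1)=y_{i+1}(0)$ cause the contributions from the interior preimages $c_2,\ldots,c_d$ to telescope away, leaving only $f(0)/|T'(0)|$ (from $y_1(0)=0$) and $f(1)/|T'(1)|$ (from $y_d(1)=1$). Since $T'$ is continuous on the circle, $|T'(0)|=|T'(1)|$, and we obtain
$$\mathcal{L}f(1) - \mathcal{L}f(0) = \frac{f(1)-f(0)}{|T'(0)|},$$
so the bound $1/|T'(0)| \leq \lambda$ gives $J(\mathcal{L}f) \leq \lambda J(f)$. Part (3) is then immediate: if $h=\mathcal{L}h$, then $J(h)=J(\mathcal{L}h)\leq\lambda J(h)$ with $\lambda<1$ and $J(h)\geq 0$, forcing $J(h)=0$.

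The computation itself is essentially direct; the one conceptual point I would stress is the coordinate choice $T(0)=0$, without which $\mathcal{L}f$ would generically acquire an extra jump at $T(0)\in(0,1)$ and the spaces $C^k([0,1])$ (interpreted as functions on the circle with at most a single jump at $0\sim 1$) would not be preserved in the first place.
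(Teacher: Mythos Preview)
Your proof is correct and follows essentially the same approach as the paper: normalize so that $T(0)=0$, use the inverse-branch formula for $\mathcal{L}$, and observe that the contributions from the interior preimages of $0$ cancel when computing $\mathcal{L}f(1)-\mathcal{L}f(0)$, leaving only the term $(f(1)-f(0))/|T'(0)|$. If anything, your write-up is slightly more explicit than the paper's (you spell out the telescoping and the identity $|T'(0)|=|T'(1)|$), but the argument is the same.
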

\begin{proof}
We suppose $T(0)=0$, by the regularity of $T$ and the form of the operator
follows that $\PF$ preserves $C^k([0,1])$.

To prove the second statement, let us
denote by $d_{i}$ the preimages of $0$ that are contained inside the
interval $(0,1)$. By continuity of $T$ on $(0,1)$ we have 
\begin{equation*}
\mathcal{L}f(0)-\mathcal{L}f(1)=\frac{1}{T^{\prime }(0)}(f(0)-f(1))+\sum_{i}\frac{f(d_{i})}{T^{\prime
}(d_{i})}-\sum_{i}\frac{f(d_{i})}{T^{\prime }(d_{i})}.
\end{equation*}
If $\mathcal{L}h=h$ then $J(h)\leq \lambda J(h)$. Since $\lambda<1$ this implies item $3$.
\end{proof}
Before introducing our discretization scheme, we state Lasota-Yorke inequalities for $\mathcal{L}$ when acting on ${C}^{1}([0,1])$, ${C}^{2}([0,1])$. 
Since these inequalities will be used in the computer implementation, we also give estimates for the constants involved. For the proof of Proposition \ref{LYall}, see Section \ref{sec:LY} in the Appendix \ref{appendix II}. 
\begin{proposition}\label{LYall} 
\end{proposition}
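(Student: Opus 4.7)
The plan is to differentiate $\mathcal{L}f(x)=\sum_i f(y_i(x))/T'(y_i(x))$ twice using the chain rule on the inverse branches, and then bound each resulting summand explicitly in terms of the expansion rate $\lambda=\|1/T'\|_\infty$, the degree $d$ of $T$, and the sup norms of $T''$ and $T'''$. Since $T$ is orientation preserving and $d$-to-one, the inverse branches $y_i$ are globally defined on $(0,1)$ and satisfy $y_i'(x)=1/T'(y_i(x))$; moreover $\mathcal{L}1(x)=\sum_i 1/T'(y_i(x))\le d\lambda$ pointwise, which I would use to replace $\sum_i 1/T'(y_i)$ in every estimate, turning each bound into an expression involving only the named data.

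A first differentiation gives
\begin{equation*}
(\mathcal{L}f)'(x)=\sum_i\frac{f'(y_i)}{T'(y_i)^2}-\sum_i\frac{f(y_i)\,T''(y_i)}{T'(y_i)^3},
\end{equation*}
so taking sup norms produces $\|(\mathcal{L}f)'\|_\infty\le d\lambda^2\|f'\|_\infty+d\lambda^3\|T''\|_\infty\|f\|_\infty$. Iterating $\mathcal{L}$ and telescoping the mixed terms in the usual Lasota--Yorke way yields an inequality of the form $\|\mathcal{L}^n f\|_{C^1}\le A_1\lambda^n\|f\|_{C^1}+B_1\|f\|_{C^0}$, with $A_1,B_1$ explicit in $\lambda$, $d$, and $\|T''\|_\infty$.

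For the $C^2$ bound I would differentiate once more. The principal term is $\sum_i f''(y_i)/T'(y_i)^3$, bounded by $d\lambda^3\|f''\|_\infty$, which provides the desired contraction on the second derivative; the remaining terms are a finite collection of products of $f$, $f'$, $T''$, $T'''$, $1/T'$, each of which can be bounded by a constant multiple of $\|f\|_{C^1}$, the constants being polynomial expressions in $\lambda$, $d$, $\|T''\|_\infty$, $\|T'''\|_\infty$. Iteration then produces $\|\mathcal{L}^n f\|_{C^2}\le A_2\lambda^n\|f\|_{C^2}+B_2\|f\|_{C^1}$ with explicit constants.

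The main obstacle is not analytic but bookkeeping: the constants must be kept sharp and expressed only in quantities the computer can evaluate (namely $\lambda$, $d$, $\|T''\|_\infty$, $\|T'''\|_\infty$), because they feed directly into the quantitative arguments of Remark \ref{comput} and the examples in Sections \ref{noise} and \ref{sec:deterministic}. A minor subtlety is that regarding $T$ as a map on $[0,1]$ produces a jump at $0$ in $\mathcal{L}f$; but this jump lies outside the open interval where the pointwise derivative identities hold, and by Lemma \ref{Jcontract} it contracts by $\lambda$ under each application of $\mathcal{L}$, so it does not interfere with the derivative estimates.
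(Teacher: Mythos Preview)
Your sketch has a genuine gap in the $L^\infty$ control. You replace $\sum_i 1/T'(y_i)$ by $d\lambda$, but since $\int_0^1 T'\,dx=d$ and $T'\ge 1/\lambda$ one always has $d\lambda\ge 1$; hence your one-step $C^1$ contraction factor $d\lambda^2$ can easily exceed $1$ (e.g.\ a degree-$2$ map with $\inf T'=1.1$ gives $d\lambda^2\approx 1.65$), and iterating the inequality produces no decay. You also do not address item~(1) of the Proposition at all, and that bound on $M=\sup_n\|\mathcal{L}^n\|_{L^\infty\to L^\infty}$ is precisely what makes items~(2) and~(3) work with the stated constants.

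The paper's device is to pull only \emph{one} factor of $1/T'$ out of each sum and recognise the remainder as an application of $\mathcal{L}$: writing $\sum_i f'(y_i)/T'(y_i)^2=\mathcal{L}(f'/T')(x)$ gives $\|(\mathcal{L}f)'\|_\infty\le \lambda M\|f'\|_\infty+BM\|f\|_\infty$ with $B=\|T''/(T')^2\|_\infty$, and this \emph{does} contract. The required uniform bound on $M$ (item~(1)) is obtained separately via a BV Lasota--Yorke inequality applied to the constant function~$1$, using positivity ($\|\mathcal{L}^n\|_\infty=\|\mathcal{L}^n 1\|_\infty$) and $\mathrm{Var}(\mathcal{L}^n 1)\le B/(1-\lambda)$. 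For item~(3) the same mechanism is combined with Lemma~\ref{lemma:couple_estimates}, which controls the composite distortion ratios $G_k''/(G_k')^2$ and $G_k'''/(G_k')^3$ uniformly in $k$. Accordingly the constants in the statement are expressed in $M$, $\lambda$, $\|T''/(T')^2\|_\infty$ and $\|T'''/(T')^3\|_\infty$, not in $d$, $\|T''\|_\infty$, $\|T'''\|_\infty$ as you propose; those distortion ratios are the quantities that iterate well, whereas yours do not.
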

\begin{enumerate}
\item Let $M:=\sup_{n}||\mathcal{L}^{n}||_{L^{\infty }\rightarrow L^{\infty
}}$. Then 
$$M\leq 1+\frac{B}{1-\lambda },$$
where $\lambda :=(\inf_{x\in \mathbb{T}}|D_{x}T|)^{-1}<1$ and $B=||T^{\prime
\prime }/(T^{\prime 2})||_{\infty }.$\\

\item For $f\in {C}^{1}([0,1])$ we have
\begin{equation*}
||\mathcal{L}f||_{{C}^{1}}\leq \lambda M||f||_{{C}%
^{1}}+M^2||f||_{\infty }.
\end{equation*}%

\item For $f\in {C}^{2}([0,1])$ we have
\begin{equation*}
||\mathcal{L}f||_{{C}^{2}}\leq \lambda ^{2}M||f||_{{C}%
^{2}}+D||f||_{{C}^{1}}.
\end{equation*}%
where%
\begin{equation*}
D=\lambda M+M^2+3\max \{1,\bigg|\bigg|\frac{T^{\prime \prime }}{(T^{\prime
})^{2}}\bigg|\bigg|_{\infty }^{2}\}M+M\bigg|\bigg|\frac{T^{\prime \prime
\prime }}{(T^{\prime })^{3}}\bigg|\bigg|_{\infty }.
\end{equation*}
\end{enumerate}

The above inequalities, along with the properties of the system, imply that $
\mathcal{L}$ has a spectral gap on ${C}^{1}([0,1])$ and on $%
{C}^{2}([0,1]).$ Moreover, $1$ is a simple dominant eigenvalue.
In particular, this implies that $T$ admits a unique invariant density $h$ in $
{C}^{2}([0,1])$ and the system $(T,\mathbb{T},\mu )$, where $\mu
:=h\cdot m$, is mixing (see \cite{notes} for an elementary proof of this).

\subsection{A finite rank approximation of $\mathcal L$  as an operator from $C^1\to C^0$.}\label{newproj}
To compute the rate of convergence to equilibrium and the linear response we  introduce a finite rank
approximations of $\mathcal L$ which will be called $\mathcal{L}_{\eta }$.

\bigskip

We start by defining a suitable partition of unity. 
Let us consider the partition of unity $\{\phi _{i}\}_{i=0}^{m}$  
defined in the following way: for $i=0,\dots,m$, let $a_{i}=i/m$. For $i=0,\ldots ,m$ set
\begin{equation}\label{unity}
\phi_i(x)=\phi(m\cdot x-i),
\end{equation}
where 
\begin{equation}
\phi(x)=\left\{ 
\begin{array}{cc}
1-3x^2-2x^3 & x\in \lbrack -1,0] \\ 
1-3x^2+2x^3 & x\in \lbrack 0,1]\\
0&\text{otherwise}
\end{array}%
\right..
\end{equation}%
Note that for $i=0$ and $i=m$, the bump function is restricted to half of its support. Also note that $\phi_i(a_j)=\delta_{ij}$ (where $\delta_{ij}=1$ if $i=j$, $0$ in all the other cases) 
and that $||\phi_i||_{\infty}=1$, $||\phi_i'(x)||_{\infty}=3m/2$, $\sum_i \phi_i(x)=1$.
\begin{remark}
There are reasons why this choice of $\phi$ is sensible for our line of work: computing the value of
a cubic polynomial is fast by using Horner's scheme, and rigorous bounds are implemented via interval arithmetics \cite{Tucker}. The same is true for the derivatives of
the $\phi_i$ defined above\footnote{An alternative approach would be to choose a smooth bump function
\begin{equation}
\phi(x)=\left\{ 
\begin{array}{cc}
e^{-\frac{1}{1-x^2}+1} & |x|<1  \\ 
0 & |x|>1%
\end{array}%
\right..
\end{equation}%
and build a partition of unity by rescaling and translating this function, but the implementation
of this approach is more delicate since the derivative of $\phi$ cannot be implemented in a naive way and the sum 
of two functions 
\[\phi(m\cdot x-i)+\phi(m\cdot x-(i+1))\neq 1\] 
for $x\in [a_i,a_{i+1}]$.}.
\end{remark}

To ensure that our discretization preserves integrals, we use an auxiliary function $\kappa(x)$:
\[
\kappa(x)=6x(1-x),
\]
by direct computation $||\kappa(x)||_{\infty}=3/2$, $||\kappa'(x)||_{\infty}=6$. Moreover, $\int_0^1 \kappa(x)dx=1$.

Set $\eta:=1/m$ and define
\begin{equation*}
\Pi _{\eta }(f)(x):=\sum_{i}f(a_{i})\cdot \phi _{i}(x)+\bigg(\int_0^1 f dm -\sum_{i=0}^m f(a_i)\int_0^1 \phi_i dm \bigg)\kappa(x).
\end{equation*}
We set
\begin{equation}\label{myop}
\mathcal{L}_{\eta }:=\Pi _{\eta }\mathcal{L}\Pi _{\eta }.
\end{equation}
We now prove properties of $\Pi_{\eta}$ that will be used to verify the assumptions of Theorem \ref{main}.
\begin{lemma}
\label{it:C2contr} For $f\in {C}^{1}([0,1])$, we have 
\begin{enumerate} 
\item $||\Pi _{\eta }f||_{\infty}\le 4||f||_{\infty}$;
\item $||\Pi _{\eta }f||_{\infty}\le ||f||_{\infty}+\frac{3}{2}||f'||_{\infty}\eta$;
\item $||(\Pi _{\eta }f)'||_{\infty}  \leq (\frac{3}{2}+6\eta )||f'||_{\infty}$;
\item $||\Pi _{\eta }f-f||_{\infty }\leq \frac{5}{2}||f^{\prime}||_{\infty }\eta$.
\end{enumerate}
\end{lemma}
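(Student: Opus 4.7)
My plan is to decompose $\Pi_{\eta}f = Sf + c_f\,\kappa$, where
\[
Sf(x) := \sum_{i=0}^m f(a_i)\,\phi_i(x), \qquad c_f := \int_0^1 f\,dm - \sum_i f(a_i)\int_0^1 \phi_i\,dm = \int_0^1 (f - Sf)\,dm,
\]
and to treat $Sf$ and the scalar correction $c_f\kappa$ separately in each of the four estimates. The structural fact driving everything is that on each subinterval $[a_i, a_{i+1}]$ only $\phi_i, \phi_{i+1}$ are nonzero and $\phi_i + \phi_{i+1} \equiv 1$ there (including the two half-support boundary intervals, which one verifies by expanding the cubics). Consequently $Sf$ restricted to $[a_i, a_{i+1}]$ is a convex combination of $f(a_i)$ and $f(a_{i+1})$, which yields $\|Sf\|_\infty \le \|f\|_\infty$ and the interpolation error bound $\|Sf - f\|_\infty \le \|f'\|_\infty/m$, the latter by writing $Sf - f = \phi_i(f(a_i)-f) + \phi_{i+1}(f(a_{i+1})-f)$ and invoking the mean value theorem on $[a_i,a_{i+1}]$.

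For the scalar $c_f$ I will use two complementary bounds: the trivial $|c_f| \le 2\|f\|_\infty$ (from $|\int f\,dm| \le \|f\|_\infty$ and $\sum \int \phi_i\,dm = \int \sum\phi_i\,dm = 1$), and the sharp $|c_f| \le \|Sf - f\|_\infty \le \|f'\|_\infty/m$. Items (1), (2), (4) then reduce to the triangle inequality $\|\Pi_\eta f\|_\infty \le \|Sf\|_\infty + |c_f|\,\|\kappa\|_\infty$ together with $\|\kappa\|_\infty = 2$: item (1) uses the trivial bound to give $\|f\|_\infty + 4\|f\|_\infty$; item (2) uses the sharp bound; and item (4) combines $\|Sf - f\|_\infty \le \|f'\|_\infty/m$ with $|c_f|\,\|\kappa\|_\infty \le 2\|f'\|_\infty/m$.

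For item (3) I differentiate on each $[a_i, a_{i+1}]$: the identity $\phi_i' + \phi_{i+1}' \equiv 0$ (obtained by differentiating $\phi_i + \phi_{i+1} \equiv 1$) collapses the interpolant derivative to
\[
(Sf)'(x) = (f(a_{i+1}) - f(a_i))\,\phi_{i+1}'(x),
\]
so $\|(Sf)'\|_\infty \le (\|f'\|_\infty/m)(3m/2) = (3/2)\|f'\|_\infty$. Combined with $|c_f|\,\|\kappa'\|_\infty \le (\|f'\|_\infty/m)(3m) = 3\|f'\|_\infty$, this gives $\|(\Pi_\eta f)'\|_\infty \le (9/2)\|f'\|_\infty$. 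Adding to item (2),
\[
\|\Pi_\eta f\|_{C^1} \le \|f\|_\infty + \Bigl(\tfrac{9}{2} + \tfrac{2}{m}\Bigr)\|f'\|_\infty \le \tfrac{11}{2}\,\|f\|_{C^1},
\]
where the last step uses $m \ge 3$ (since $m > 2$) to bound $2/m \le 1$.

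\textbf{Main obstacle.} Nothing here is genuinely deep. The only place to be careful is the verification that the partition-of-unity identities $\phi_i + \phi_{i+1} \equiv 1$ and $\phi_i' + \phi_{i+1}' \equiv 0$ continue to hold on the two boundary subintervals $[a_0,a_1]$ and $[a_{m-1},a_m]$, where the bumps have been truncated to half their natural support; this reduces to the same algebraic identity as for interior intervals. Everything else is triangle inequalities together with the norms $\|\phi_i\|_\infty,\|\phi_i'\|_\infty,\|\kappa\|_\infty,\|\kappa'\|_\infty$ already recorded prior to the lemma.
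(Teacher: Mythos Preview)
Your proof is correct and follows essentially the same approach as the paper: both use the partition-of-unity identity $\sum_i \phi_i \equiv 1$ to get the interpolation error $\|Sf - f\|_\infty \le \|f'\|_\infty/m$ and bound the scalar correction by integrating it, then use $\sum_i \phi_i' \equiv 0$ to control $(Sf)'$, arriving at the same $\tfrac{9}{2}\|f'\|_\infty$ derivative bound. Your localized presentation on each $[a_i,a_{i+1}]$ is slightly more explicit than the paper's global sums, but the ideas and constants match.
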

\begin{proof}
The following approximation inequality holds:
\begin{equation}\label{e40}
\begin{split}
|f(x)-\sum f(a_{i})\phi _{i}(x)|& =|\sum_{i}(f(x)-f(a_{i}))\phi _{i}(x)| \\
& =|\sum_{i}f^{\prime }(\xi_i )(x-a_{i})\phi _{i}(x)|\leq 
||f^{\prime}||_{\infty }\eta.
\end{split}
\end{equation}
This implies that
\begin{equation}\label{wb:eq1}
\bigg|\int_0^1 f dm -\sum_{i=0}^m f(a_i)\int_0^1 \phi_i dm\bigg|\leq 
||f^{\prime}||_{\infty }\eta. 
\end{equation}
By \eqref{wb:eq1}, we have
\begin{align}\label{eq:infbound}
||\Pi _{\eta }f||_{\infty }&\leq \max_{x\in[0,1]}|\sum_{i=0}^n f(a_i)\phi_i(x)|+|\int f(x)-\sum_{i=0}^n f(a_i)\phi_i(x) dx|\cdot ||\kappa(x)||_{\infty}\\ \nonumber
&\leq ||f||_{\infty}|\sum_i \phi_i(x)|+\frac{3}{2}|\int f(x)-\sum_{i=0}^n f(a_i)\phi_i(x) dx|, 
\end{align}
which implies (1) and (2) of the lemma. We now prove (3). First, since the $\{\phi_i\}_{i=0}^m$ is a partition of unity, we have
\[
\sum_{i=0}^n \phi_i'(x)=0.
\]
Therefore,
\begin{align}\label{eq:derbound}
||(\Pi_{\eta}f)'||_{\infty}&\leq|\sum_{j=0}^{m/2} f(a_{2j})-f(a_{2j+1})\phi_i'(x)|+|\int f(x)-\sum_{i=0}^n f(a_i)\phi_i(x) dx|\cdot ||\kappa'(x)||_{\infty}\\ \nonumber
&\leq \frac{3}{2}\max_{j}|\frac{f(a_{2j})-f(a_{2j+1})}{\eta}|+6||f'||_{\infty}\eta\leq \bigg(\frac{3}{2}+6\eta\bigg)||f'||_{\infty}. 
\end{align}
Thus, (3) follows from \eqref{eq:infbound}, \eqref{eq:derbound}.  Also note that (4) of the lemma follows from \eqref{e40} and \eqref{wb:eq1}.
\end{proof}

\begin{remark}\label{assumptmain} By Item (4) of Lemma \ref{it:C2contr}, assumption (2) of Theorem \ref{main} is satisfied. By the Lasota-Yorke inequalities\footnote{See also the Appendix \ref{appendix II} for a proof of a uniform Lasota-Yorke inequality of $\mathcal L$ and $\mathcal L_{\eta}$ on $C^1$.}
given in Proposition \ref{LYall},  
$\mathcal{L}$ and $\mathcal{L}_{\eta }$ satisfy assumption (3) of Theorem \ref{main}. 
\end{remark}
\section{Response for a stochastic perturbation}\label{noise}
In this section we consider  stochastic perturbations of the expanding maps described in the previous section. 
At each step we add a small random perturbation distributed with a certain probability density $j$. 
We describe the analytic estimates which are necessary to apply our algorithm in that case and show the 
result of an actual implementation, where we compute the response for the stochastic perturbation of an expanding map. 
In particular we show the existence and the structure of the operator $\hat{\mathcal L}$ for  this case. We use $\text{Var}(\cdot)$ to denote the one dimensional variation of a function. 
\subsection{A stochastic perturbation}\label{Stoch}
For $f\in C^0([0,1])$ let $%
K_{\varepsilon }$ denote the operator defined as: 
\begin{equation*}
K_{\varepsilon }f(x)=\int_{\mathbb{T}}\varepsilon ^{-1}j(\varepsilon
^{-1}(x-y))f(y)dy,
\end{equation*}%
where $j\in C^{\infty }(\mathbb{R},\mathbb{R}^{+})$, $\text{supp}(j)\subset
\lbrack -1/2,1/2]$ and $\int_{\mathbb{R}}j(y)dy=1$.
\begin{lemma}
[Properties of $K_{\protect\epsilon }$]\label{lemma23}
\text{ }\\
\begin{enumerate}
\item For $f\in {C}^{k}$, $k\in \{0,1,2\}$%
\begin{equation*}
\Var(K_{\epsilon }f)\leq \Var(f) ,\quad ||K_{\epsilon }f||_{{C}^{k}}\leq ||f||_{{C}^{k}};
\end{equation*}

\item for $f\in {C}^{1}$%
\begin{equation*}
||K_{\epsilon }f-f||_{\infty }\leq \epsilon ||f||_{{C}^{1}};
\end{equation*}

\item for $f\in {C}^{2}$ 
\begin{equation*}
||K_{\epsilon }f-f||_{{C}^1}\leq \epsilon ||f||_{{C}^{2}}.
\end{equation*}
\end{enumerate}
\end{lemma}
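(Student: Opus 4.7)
The plan is to observe that $K_\epsilon$ is convolution with the mollifier $j_\epsilon(z) := \epsilon^{-1}j(\epsilon^{-1}z)$, which is a probability density supported in $[-\epsilon/2,\epsilon/2]$. Two facts drive all three items: (a) convolution with a probability density is an $L^\infty$-contraction (by the triangle inequality applied to the integral), and (b) because $j_\epsilon$ is smooth and compactly supported, differentiation commutes with $K_\epsilon$, so $(K_\epsilon f)^{(k)} = K_\epsilon(f^{(k)})$ for $k\le 2$ whenever $f\in C^k$.

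For item (1), I would combine (a) and (b) to get $\|(K_\epsilon f)^{(k)}\|_\infty = \|K_\epsilon(f^{(k)})\|_\infty \le \|f^{(k)}\|_\infty$; summing over $0\le k\le 2$ yields $\|K_\epsilon f\|_{C^k}\le \|f\|_{C^k}$. The variance bound $\mathrm{Var}(K_\epsilon f)\le \mathrm{Var}(f)$ follows from the same idea applied to differences: for any partition $0=x_0<x_1<\cdots<x_N=1$,
\begin{equation*}
\sum_i |K_\epsilon f(x_{i+1})-K_\epsilon f(x_i)| = \sum_i \Bigl|\int j_\epsilon(z)(f(x_{i+1}-z)-f(x_i-z))\,dz\Bigr| \le \int j_\epsilon(z)\,\mathrm{Var}(f)\,dz = \mathrm{Var}(f),
\end{equation*}
after interchanging sum and integral.

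For items (2) and (3), I would rewrite the difference using the fact that $\int j_\epsilon = 1$:
\begin{equation*}
K_\epsilon f(x)-f(x) = \int j_\epsilon(x-y)\bigl(f(y)-f(x)\bigr)\,dy.
\end{equation*}
On the support of $j_\epsilon$, $|x-y|\le \epsilon/2$, so by the mean value theorem $|f(y)-f(x)|\le (\epsilon/2)\|f'\|_\infty$, hence $\|K_\epsilon f-f\|_\infty \le (\epsilon/2)\|f'\|_\infty \le \epsilon\|f\|_{C^1}$, which gives (2). For (3), I would apply the same argument to $f'$ (using $(K_\epsilon f-f)'=K_\epsilon f'-f'$ by (b)) to get $\|(K_\epsilon f-f)'\|_\infty \le (\epsilon/2)\|f''\|_\infty$, and then
\begin{equation*}
\|K_\epsilon f-f\|_{C^1} = \|K_\epsilon f-f\|_\infty + \|(K_\epsilon f)'-f'\|_\infty \le \tfrac{\epsilon}{2}\bigl(\|f'\|_\infty+\|f''\|_\infty\bigr)\le \epsilon\|f\|_{C^2}.
\end{equation*}

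There is no real obstacle; the only point that needs a moment of care is the circular geometry, which is benign because $\mathrm{supp}(j)\subset[-1/2,1/2]$ and the paper's $C^k$ spaces allow discontinuities at $0$, so the convolution on $\mathbb T$ behaves just like a convolution on $\mathbb R$ followed by restriction. Thus the mollifier identities $(K_\epsilon f)'=K_\epsilon f'$ and $\int j_\epsilon=1$ go through verbatim, and the mean value step produces the claimed $\epsilon$-linear bounds.
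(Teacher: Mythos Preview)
Your proof is correct and follows essentially the same route as the paper: item (1) is dismissed as a standard convolution fact, items (2) and (3) are obtained by writing $K_\epsilon f(x)-f(x)=\int j_\epsilon(x-y)(f(y)-f(x))\,dy$ and using $|x-y|\le\epsilon/2$ together with the mean value theorem, with (3) reduced to (2) via $(K_\epsilon f)'=K_\epsilon(f')$. The only cosmetic difference is that the paper derives this last commutation by an explicit integration-by-parts step (using $\partial_x j(\epsilon^{-1}(x-y))=-\partial_y j(\epsilon^{-1}(x-y))$), whereas you invoke it directly; you also pick up the slightly sharper constant $\epsilon/2$ where the paper writes $\epsilon$.
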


\begin{proof}
The first assertion is a standard property of convolution.

For (2), we have
\begin{equation*}
|K_{\epsilon }f(x)-f(x)|=|\int_{\mathbb{T}}\epsilon ^{-1}j(\epsilon
^{-1}(x-y))(f(y)-f(x))dy|\leq \epsilon ||f^{\prime }||_{\infty }.
\end{equation*}%
Since the support of $j$ is contained in $[-1 /2,1 /2]$. To prove (3), observe that 
\begin{equation*}
\frac{\partial }{\partial x}j(\epsilon ^{-1}(x-y))=-\frac{\partial }{%
\partial y}j(\epsilon ^{-1}(x-y)).
\end{equation*}%
Therefore, 
\begin{equation*}
|(K_{\epsilon }f(x))^{\prime }-f^{\prime }(x)|=|\int_{\mathbb{T}}\epsilon
^{-1}j(\epsilon ^{-1}(x-y))\frac{\partial }{\partial y}f(y)dy-f^{\prime
}(x)|.
\end{equation*}%
Using integration by parts and the compactness of the support of $j$, we
obtain: 
\begin{equation*}
|(K_{\epsilon }f(x))^{\prime }-f^{\prime }(x)|\leq \epsilon ||f^{\prime
\prime }||_{\infty }.
\end{equation*}
\end{proof}
We now define the (average) transfer operator relative to the stochastic system with size $\epsilon$ noise  by setting
\begin{equation}\label{Kerop}
\mathcal{L}_{\varepsilon }:=K_{\varepsilon }\mathcal{L},
\end{equation}
where  $\mathcal{L}$ is the transfer operator for the map without noise defined in \eqref{PF}. 
Below we find the response of stochastic perturbation \eqref{Kerop} when $\epsilon $ is small. 
We start by finding the formula of the corresponding operator $\mathcal{\hat{L}}$.
\begin{proposition}
Let $\mathcal{L}_{\varepsilon }$ be as in \eqref{Kerop}. Set $ \gamma :=\int j(\xi )\xi d\xi$. For any $f\in C^{2}$
\begin{equation}
\lim_{\epsilon \to 0}||\frac{1}{\epsilon }(\mathcal{L}_{\epsilon }(f)-\mathcal{L(}f))-\gamma (\mathcal Lf)^{\prime}||_{C^1}=0.
\label{eq:gamma}
\end{equation} 
In particular, 
\[
\mathcal{\hat{L}}f=\gamma (\mathcal{L}f)^{\prime }.
\]
\end{proposition}
\begin{proof}
Notice that $\frac{1%
}{\epsilon }(\mathcal{L}_{\epsilon }(f)-\mathcal{L(}f))=\frac{1}{\epsilon }%
\left( K_{\epsilon }-Id)(\mathcal{L}f\right) $ and recall that for any $f \in C^{2}$ we have $%
\mathcal{L}f \in C^{2}$. Therefore, it is sufficient to prove that, for every $f\in C^{2}$, $\frac{1}{%
\epsilon }(K_{\epsilon }-Id)f$ converges to $\gamma f^{\prime }$ in the $%
C^{1}$ norm as $\epsilon \rightarrow 0$ . We recall
that the noise kernel is given by rescaling a fixed kernel $j$, that is $%
j_{\epsilon }(x)=\frac{1}{\epsilon }j(\frac{1}{\epsilon }x)$. Thus, the
support of $j_{\epsilon }$ is contained in the interval $[-\epsilon ,\epsilon
]$ and  $\int xj_{\epsilon }(x)dx=\epsilon \gamma $. To prove the convergence in $C^{1}$ of the limit, we have to show that%
\[
\lim_{\epsilon \rightarrow 0}||\frac{1}{\epsilon }\left(\int
j_{\epsilon }(t-x)f(t)~dt-f(x)\right) -\gamma f^{\prime }(x)||_{\infty}=0
\] and
\[
\lim_{\epsilon \rightarrow 0}||\frac{1}{\epsilon }\left(\int
j_{\epsilon }(t-x)f'(t)~dt-f'(x)\right) -\gamma f^{\prime\prime}(x)||_{\infty}=0.
\] 
The first limit can be treated as follows: 
\begin{gather*}
\begin{split}
&\left|\frac{1}{\epsilon }\left(\int j_{\epsilon }(t-x)f(t)~dt-f(x)\right) -\gamma f^{\prime }(x)\right|\\
&=\left|\frac{1}{\epsilon}\left( \int j_{\epsilon}(t-x)(t-x)\frac{f(t)-f(x)}{(t-x)}~dt\right) -\gamma f^{\prime }(x)\right| \\
&=\left|\frac{1}{\epsilon }\int j_{\epsilon}(t-x)(t-x)\left(\frac{f(t)-f(x)}{(t-x)}-f^{\prime }(x)\right)~dt\right| \\
&\leq \gamma\cdot\sup_{t\in [x-\epsilon ,x+\epsilon ]}\left|\frac{f(t)-f(x)}{(t-x)}-f^{\prime }(x)\right|.
\end{split}
\end{gather*}
Since $f \in C^{1}[0,1]$, $\sup_{t\in \lbrack x-\epsilon ,x+\epsilon ]}|(%
\frac{f(t)-f(x)}{(t-x)}-f^{\prime }(x))|\rightarrow 0$ uniformly. The  limit
\[
\lim_{\epsilon \rightarrow 0}||\frac{1}{\epsilon }\left(\int
j_{\epsilon }(t-x)f'(t)~dt-f'(x)\right) -\gamma f^{\prime\prime}(x)||_{\infty}
\] 
can be treated in the same way using the fact that $f\in C^{2}$.
\end{proof}
In order to apply Proposition \ref{prop1} and Theorem \ref{main} to the stochastic perturbation we note that the previous proposition ensures that assumption (4) of Proposition \ref{prop1} is satisfied. For the other
assumptions we refer to the following remark.
\begin{remark}
Let $h$ be the fixed probability density in $C^2([0,1])$ of $\mathcal{L}$. By Lemma \ref{Jcontract}
we have $J(h)=0$. Therefore,
\[
\int_0^1 h' dx=0.
\]
\end{remark}
\begin{remark}\label{RKL}
\label{remarkonstab} \ By Item (1) of Lemma \ref{lemma23} it follows that 
\[
\Var(K_{\epsilon}\mathcal{L}f)\leq \Var(\mathcal{L}f)\quad\text{and}\quad ||K_{\epsilon}\mathcal{L}f||_{C^k}\leq ||\mathcal{L}f||_{C^k}
\]
for $k=1,2$. Therefore, $\mathcal{L}_{\epsilon }$ and $\mathcal{L}$, satisfy uniform Lasota-Yorke
inequalities on the spaces $BV$, $C^1([0,1])$ and $C^2([0,1])$. 
This implies that assumption (1) of Proposition \ref{prop1} holds. \footnote{Let $f\in C^0$ be a density. Suppose $||f||_\infty \leq 1$ and consider $||\mathcal{L}_\epsilon^n f||_\infty$. First remark that since $\mathcal{L_\epsilon}$ is positive,  $f\geq 0$ implies $||\mathcal{L_\epsilon}^n f||_\infty \leq ||\mathcal{L_\epsilon}^n 1||_\infty $ ($1$ is the function having constant value $1$). Furthermore $||\mathcal{L_\epsilon}^n 1||_\infty$  is uniformly bounded by the Lasota Yorke inequalities. 
In the general case we can decompose $f$ into positive and negative parts and apply the same construction to each component. }
Moreover, by the stability result of \cite{KL} (see also \cite{notes} 
for an elementary proof of a similar result) for sufficiently small $\epsilon >0$,
assumption (3) of Proposition \ref{prop1} holds. Finally,  by Item (2) of Lemma \ref{lemma23} we obtain the approximation assumption (Item
(2)) of Proposition \ref{prop1}.
\end{remark}
Thus, by Proposition \ref{prop1}, the linear response holds:
\begin{equation}
\lim_{\epsilon \rightarrow 0}||\frac{h_{\epsilon }-h}{\epsilon }-%
\hat{h}||_{\infty }=0,  \label{response}
\end{equation}
where $\hat{h}:=(\Id-\mathcal{L})^{-1}\mathcal{\hat{L}}h.$
\subsection{The rigorous computation of the response $\hat{h}$}\label{3.5}
Now we show how to compute the linear response under stochastic perturbations for the class of systems described in this section. 

\bigskip

Assume we are given a family of $C^2$ functions\footnote{Note that $h_\eta$ is not the fixed point of the discretization $\mathcal L_\eta$ defined earlier in the section. In the example of this section we obtain the sequence $h_\eta$ through the discretization defined in  subsection \ref{subsec:approxLhatstochastic} in the Appendix \ref{appendix II}.} $h_\eta$ such that $||h_\eta-h||_{C^1}\to 0$ as $\eta\to 0$. In particular, this would imply
\[
||h'_{\eta}-h'||_{\infty}\to 0\quad\text{as}\quad\eta\to 0.
\]
We can then apply Theorem \ref{main} with $f_\eta:=\gamma h'_\eta$ to obtain:
\begin{corollary}
Assume $h_\eta$ is a $C^2$ family of functions such that $||h_\eta-h||_{C^1}\to 0$ as $\eta\to 0$. Then, given $\tau >0$, $\exists ~l^{\ast }\in \mathbb{N}$ and $\eta >0$ such that 
\begin{equation*}
||\hat{h}-\gamma \sum_{k=0}^{l^{\ast }-1}\mathcal{L}_{\eta }^{k}{h}_{\eta
}^{\prime }||_{\infty }<\tau ,
\end{equation*}%
where $h'_{\eta }$ is the derivative of $h_\eta$, $\gamma $ is as in equation \eqref{eq:gamma} and $\mathcal{L}_{\eta }$ is the operator defined in \eqref{myop}.
\end{corollary}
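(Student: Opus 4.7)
The plan is to verify that the hypotheses of Theorem \ref{main} hold with the choice $f_\eta := \gamma h'_\eta$, and then invoke that theorem directly. The opening observation is that since $h$ is the fixed point of $\mathcal{L}$, the definition \eqref{eq:gamma} yields
\[
\hat{\mathcal{L}} h = \gamma (\mathcal{L} h)' = \gamma h',
\]
so $\hat{h} = (\Id - \mathcal{L})^{-1} \hat{\mathcal{L}} h$ and the truncated sum $\sum_{k=0}^{l^*-1} \mathcal{L}_\eta^k f_\eta$ factors the constant $\gamma$ out to reproduce exactly the right-hand side in the claim. Thus, modulo the verification of assumptions, the conclusion is a direct specialization of Theorem \ref{main}.

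First I would confirm the ambient assumptions of Proposition \ref{prop1}, which are prerequisites for Theorem \ref{main}: these were already recorded in Remark \ref{RKL}, using the uniform Lasota-Yorke inequalities of Proposition \ref{LYall}, the stability of the spectral gap under stochastic perturbations via \cite{KL}, and the approximation estimates in Lemma \ref{lemma23}. Next I would check the three hypotheses of Theorem \ref{main} for the choice $f_\eta = \gamma h'_\eta$. The convergence part of assumption (1) is immediate, since
\[
||f_\eta - \hat{\mathcal{L}} h||_\infty = \gamma \, ||h'_\eta - h'||_\infty \leq \gamma \, ||h_\eta - h||_{C^1} \to 0.
\]
Assumptions (2) and (3) of Theorem \ref{main} are essentially Remark \ref{assumptmain}: (2) comes from Lemma \ref{it:C2contr}(4), which yields $||\mathcal{L} - \mathcal{L}_\eta||_{C^1 \to C^0} = O(\eta)$, while (3) is the uniform Lasota-Yorke inequality for $\mathcal{L}_\eta$ on $C^1$ recalled in the footnote of Remark \ref{assumptmain}.

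The step that needs the most care is the remaining half of assumption (1): $||f_\eta||_{C^1}$ must be uniformly bounded, which is not implied by $C^1$-convergence of $h_\eta$ alone but requires control of $||h''_\eta||_\infty$. This, together with the zero-average condition $f_\eta \in V_s^0$ (which reduces to $h_\eta(1) = h_\eta(0)$, automatic if $h_\eta$ is regarded as a function on $\mathbb{T}$), has to be delivered by the concrete construction of the family $h_\eta$ presented in the appendix (subsection \ref{subsec:approxLhatstochastic}). Once these regularity properties are in hand, Theorem \ref{main} produces, for any $\tau > 0$, an integer $l^*$ and a parameter $\eta$ with
\[
||(\Id - \mathcal{L})^{-1} \hat{\mathcal{L}} h - \sum_{k=0}^{l^*-1} \mathcal{L}_\eta^k f_\eta||_\infty < \tau,
\]
which is exactly the inequality claimed after substituting $f_\eta = \gamma h'_\eta$ and $\hat{h} = (\Id - \mathcal{L})^{-1} \hat{\mathcal{L}} h$.
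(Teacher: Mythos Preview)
Your proposal is correct and follows essentially the same route as the paper: set $f_\eta=\gamma h_\eta'$, note that $\hat{\mathcal L}h=\gamma h'$ because $h$ is a fixed point, and invoke Theorem~\ref{main} after checking its three assumptions via Remark~\ref{assumptmain} and the hypothesis $||h_\eta-h||_{C^1}\to 0$. If anything you are more explicit than the paper, which simply attributes assumption~(1) of Theorem~\ref{main} to the stability result of \cite{KL}; your separation of the convergence part (immediate from the $C^1$ hypothesis) from the uniform $||f_\eta||_{C^1}$ bound and the zero-average condition (both supplied by the concrete construction of $h_\eta$ in subsection~\ref{subsec:approxLhatstochastic}) is a fair reading of what is actually needed.
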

\begin{proof} Since $\mathcal{\hat{L}}f:=\gamma(\mathcal L f)'$ (see equation (\ref{eq:gamma})),  the proof is a direct application of Theorem \ref{main} with $f_{\eta }=\gamma{h}_{\eta }^{\prime }$ .  Assumption (1) of Theorem \ref{main} follows by $||h_\eta-h||_{C^1}\to 0$ as $\eta\to 0$. 
Recall that the remaining assumptions (2), (3) of Theorem \ref{main} are established in Remark \ref{assumptmain}.
\end{proof}
To estimate the rigorous error we have to find suitable $l^*$ and $\eta$. We follow Remark \ref{comput}.
If we denote by $\hat{h}_{appr}$ the approximation of the linear response we have that:
\begin{align*}
&||\hat{h}-\hat{h}_{appr}||_{\infty}\leq \\
&||\sum_{i=l^*}^{+\infty}\mathcal{L}^i\hat{\mathcal{L}}h||+||\sum_{i=0}^{l^*-1}\mathcal{L}^i f_{\eta}-\sum_{i=0}^{l^*-1}\mathcal{L}^i_{\eta}f_{\eta}||_{\infty}+||\sum_{i=0}^{l^*-1}\mathcal{L}^i (f_{\eta}-\hat{\mathcal{L}}h)||_{\infty}.
\end{align*}
To estimate $||\sum_{i=l^*}^{+\infty}\mathcal{L}^i\hat{\mathcal{L}}h||_{\infty}$ we use the uniform contraction,
whose coefficients can be estimated using the method in Subsection \ref{sec:GNS}; we can find $C_1$, $k$ and $\rho$
such that
\[
||\mathcal{L}^k h'||_{\infty}\leq C_1\rho||h'||_{C^1}\leq C_1\rho||h||_{C^2};
\]
therefore, if $l=k\cdot h$
\[
||\sum_{i=l}^{+\infty}\mathcal{L}^i h'||_{\infty}\leq C_1\cdot M\cdot k\frac{\rho^h}{1-\rho}||h||_{C^2},
\]
and $||h||_{C^2}$ can be bounded from the coefficients of the $C^2$ Lasota-Yorke inequality in Proposition \ref{LYall};
this permits us to find $l^*$. The second summand may be estimated by
\[
||\sum_{i=0}^{l^*-1}\mathcal{L}^i f_{\eta}-\sum_{i=0}^{l^*-1}\mathcal{L}^i_{\eta}f_{\eta}||_{\infty}
\leq M ||\mathcal{L}-\mathcal{L}_{\eta}||_{C^1\to C^0}\sum_{k=0}^{l^*-1}\sum_{j=0}^{k-1}||\mathcal{L}_{\eta}^{k-1-j}f_{\eta}||_{C^1}, 
\]
where we numerically estimate $||\mathcal{L}_{\eta}^{i}f_{\eta}||_{C^1}$. The third summand is estimated by:
\[
||\sum_{i=0}^{l^*-1}\mathcal{L}^i (f_{\eta}-\hat{\mathcal{L}}h)||_{\infty}\leq M\cdot \gamma\cdot  l^* ||h'-h_{\eta}'||_{\infty}.
\]
\subsection{An example of linear response under a stochastic perturbation}\label{sec4}
In this example we study a circle expanding map $T$
and the behavior of the density when, at each iteration step,
we add a noise, as explained in Subsection  \ref{Stoch}.
We consider $T:S^1\to S^1$:
\[
T(x)=8x+0.0025(\sin(16\pi x)+\frac{1}{4}\sin(32\pi x))\textrm{ mod $1$};
\]
the operator $\mathcal{L}$, associated to $T$, satisfies the following inequalities:
\begin{align*}
&\textrm{Var}(\mathcal{L} f)\leq 0.127\cdot \textrm{Var}(f)+0.2||f||_1;\\
&||\mathcal{L}^n||_{\infty}\leq M=1.2,\forall n\geq  0 ;\\
&||\mathcal{L}^k f||_{C^1}\leq 1.2\cdot 0.127^k ||f||_{C^1}+1.44||f||_{\infty} ;\\
&||\mathcal{L}^k f||_{C^2}\leq 1.2\cdot 0.017^k ||f||_{C^2}+3.8||f||_{C^1}.
\end{align*}
Let $h$ be the fixed point of $\mathcal{L}$ in ${C}^2$. Following Subsection \ref{Stoch} we have that
$\hat{\mathcal{L}}:{C}^2\to {C}^1$ is given by
\[
\hat{\mathcal{L}}h=\gamma h'
\]
and, by Proposition \ref{prop1}, the linear response is given by
\[
\hat{h}=(\Id-\mathcal{L}_0)^{-1}\hat{\mathcal{L}}h=\sum_{i=0}^{+\infty}\mathcal{L}^i \hat{\mathcal{L}}h.
\]
To compute the linear response we need to compute an approximation $f_{\eta}$ to $\gamma h'$; to do so we use the discretization 
in Subsection \ref{subsec:approxLhatstochastic}. Let us choose $l^*=18$ and $\eta=524288$ both for the approximation of the density in
$C^1$ and the computation of the linear response; we approximate the linear response by
\[
\hat{h}_{appr}=\sum_{i=0}^{17}\mathcal{L}_{\eta}f_{\eta}.
\]

\begin{figure}
  \begin{subfigure}[b]{0.45\textwidth}
    \includegraphics[width=55mm,height=50mm]{./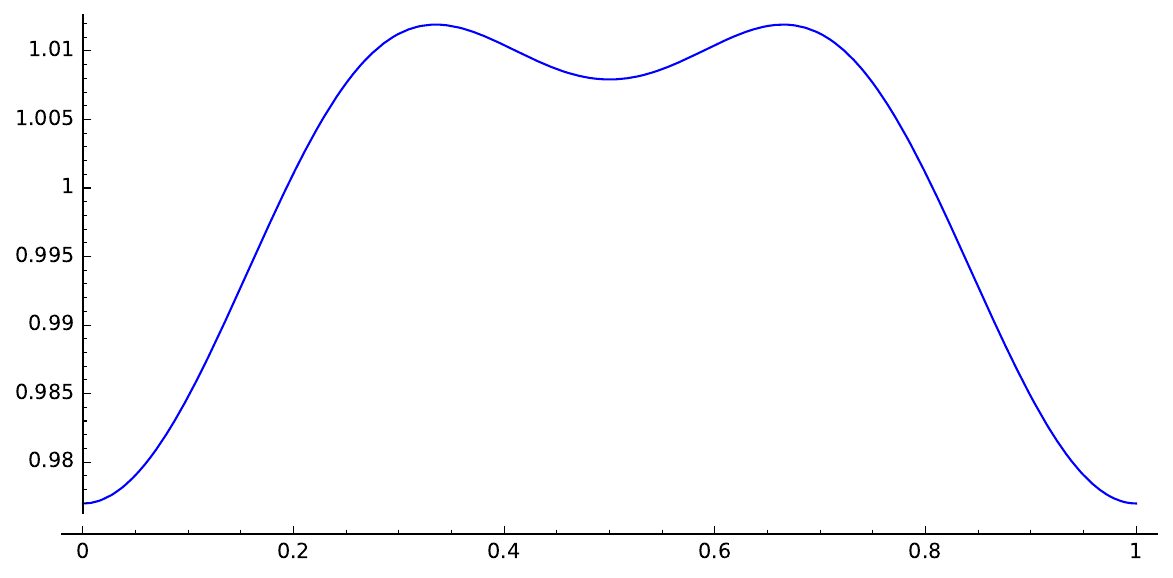}
   \caption{A plot of of the invariant density}
  \end{subfigure}
  \begin{subfigure}[b]{0.45\textwidth}
   \includegraphics[width=55mm,height=50mm]{./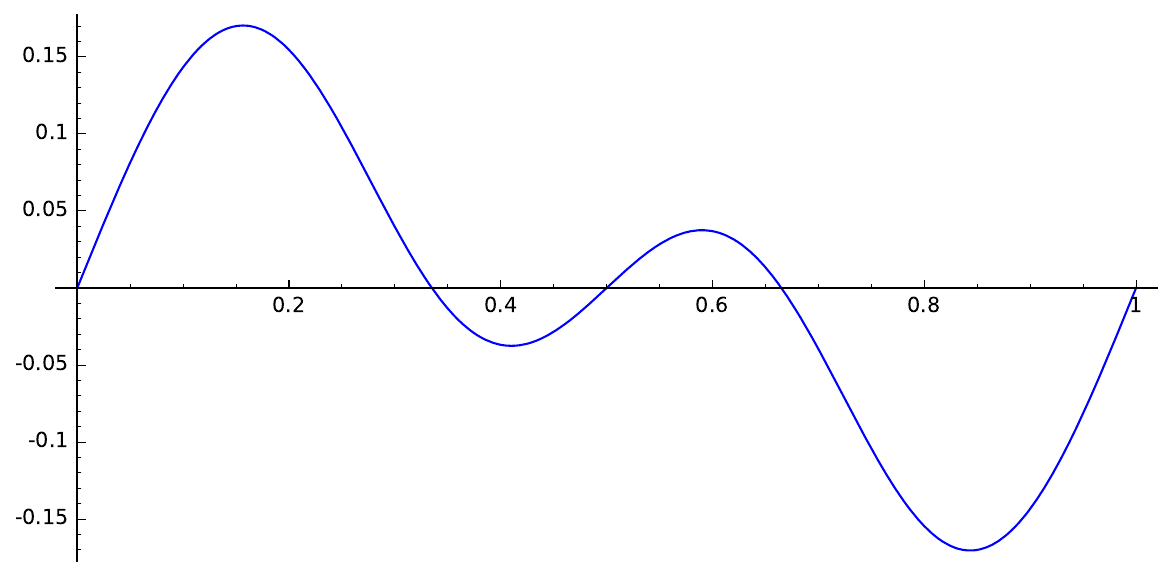} 
   \caption{The computed linear response}
  \end{subfigure}  
  \caption{The computed approximations of the response in the stochastic case, and a plot of an approximation of the invariant density for the non-perturbed map.}
  \label{fig:linear_response_stochastic}
\end{figure}

In figure \ref{fig:linear_response_stochastic} a plot of the approximations of the invariant density and of the linear response of the map under stochastic perturbation are presented. We are going to estimate the error using the algorithm developed in the present paper (refer to Theorem \ref{main} and the subsequent discussion):
\begin{align*}
&||\hat{h}-\hat{h}_{appr}||_{\infty}\leq \\
&||\sum_{i=l^*}^{+\infty}\mathcal{L}^i\hat{\mathcal{L}}h||_{\infty}+||\sum_{i=0}^{l^*-1}\mathcal{L}^i f_{\eta}-\sum_{i=0}^{l^*-1}\mathcal{L}^i_{\eta}f_{\eta}||_{\infty}+||\sum_{i=0}^{l^*-1}\mathcal{L}^i (f_{\eta}-\hat{\mathcal{L}}h)||_{\infty}.
\end{align*}
In the following subsections we are going to estimate the different summands separately.

\begin{remark}
To do our validated numerics we used SAGE \cite{SAGE} and the validated numerics packages shipped with it 
(the interval package is a binding to MPFI \cite{MPFI}), running either on local computers or on a cloud based version called 
Cocalc, https://cocalc.com/.

The discretized operators are computed using a rigorous interval Newton method \cite{Tucker}, 
while the estimates for the norm of the discretized operators are done using Scipy \cite{Scipy}, 
with rigorous error bounds on the error made by matrix-vector products, obtained through the implementation
of the rigorous matrix-vector product of \cite{Rump99}.

The experiment can be done on SageMathCloud following the SAGE worksheets contained in the software archive:
\begin{itemize}
\item \textit{stochastic\_estimate\_tail.sagews} bounds the size of the tail $||\sum_{i=l^*}^{+\infty}\PF^i\hat{\PF}h||_{\infty}$;
\item \textit{stochastic\_C\_1\_part.sagews} approximates the invariant density in the $C^1$ norm, to approximate $\hat{\PF}h$;
\item \textit{stochastic\_final\_estimate.sagews} estimates the error on the linear response and computes the approximation.
\end{itemize}

The software package contains a subset of the project compinvmeas-python, a software package designed to approximate invariant 
measures and associated objects.
There is a git repository for the full project whose access is by invitation: please send us an email so we can grant you access.

Part of the computation was done taking advantage of parallelization; to do so, some delicate memory issues arose \cite{2017-Nisoli-blog}.
\end{remark}

\subsection{Estimating $||\sum_{i=l^*}^{+\infty}\mathcal{L}^i\hat{\mathcal{L}}h||$}\label{subseq:contraction_stochastic}
Let $\eta=1/262144$ and let $\mathcal{L}_{\eta}$ be the discretized operator;
we have 
\begin{equation}\label{eq:decay_time_stochastic}
||\mathcal{L}_{\eta}^{9}|_{V_C^0}||_{\infty}\leq 9.39\cdot 10^{-5}
\end{equation}
where $V_C^0=\{f \in {C}^1, \int f=0 \}$ is the set zero average densities, and  for all $f\in {C}^1$, by the approximation  Lemma \ref{18}:
\[
||\mathcal{L}_{\eta}^{9}f-\mathcal{L}^{9}f||_{\infty}\leq 6.01\cdot 10^{-7}||f||_{{C}^1}+0.0021||f||_{\infty}.
\]
We can bound the speed of convergence to equilibrium and the associated constants 
using the technique and the notation explained in subsection \ref{sec:GNS}, with  $n_1=9$ .
For any $g$ in $V_{C^0}$, we have that, denoting by $g_i=\mathcal{L}^{7} g_{i-1}$:
\begin{equation*}
\left( 
\begin{array}{c}
||g_{i+1}||_{{C}^1} \\ 
||g_{i+1}||_{\infty}%
\end{array}%
\right) \preceq \left( 
\begin{array}{cc}
 1.0006\cdot 10^{-8} & 1.44 \\
 6.01\cdot 10^{-7} & 0.0022 
\end{array}
\right) \left( 
\begin{array}{c}
||g_{i}||_{{C}^1} \\ 
||g_{i}||_{\infty}%
\end{array}%
\right),
\end{equation*}
which gives us the following estimates
\[
||\mathcal{L}^{9k}f||_{\infty}\leq 1.00025 (0.0024)^k||f||_{{C}_1}\quad ||\mathcal{L}^{9k}f||_{\infty}\leq 4132.2 (0.0024)^k||f||_{{C}_1}.
\]
Therefore,
\[
||\sum_{i=18}^{\infty}\mathcal{L}^i\hat{\mathcal{L}}h||_{\infty}\leq 0.00037\gamma.
\]

\begin{remark}\label{rem:why_coarse}
Note that in this Subsection the partition size is coarser than the ones used in Subsection \ref{subsec:approximated_iteration} and \ref{subsec:approx_derivative};
the reason is that estimating \eqref{eq:decay_time_stochastic} is the most computationally expensive part of our algorithm.

Let $m=1/\eta$, the space of zero average measures of a partition of size $\eta$ has dimension $m-1$; the way we compute
\eqref{eq:decay_time_stochastic} rigorously is to choose a basis of the space of average zero measures and explicitly multiply
a sparse matrix with each element of this basis, which implies that the computation time scales asymptotically as $m^2$ 
(see \cite[Section 8.3]{GN} for a complete treatment in the $L^1$ case). Therefore, to speed up computations, it is worth computing \eqref{eq:decay_time_stochastic} on a coarser partition and
get information on $\PF$ by using \cite{GNS}.

Since the first submission of the article we developed more efficient tecniques to estimate these bounds, using what we call ``coarse-fine'' estimates \cite{GaMonNiPol}. 
Since the article presenting these results is still a work in progress, we decided
not to use them to do the computations in the present article, therefore the execution time of the examples is long but does not represent the state of the art
of our theory.
\end{remark}

\subsection{Estimating $||\sum_{i=0}^{l^*-1}\mathcal{L}^i f_{\eta}-\sum_{i=0}^{l^*-1}\mathcal{L}^i_{\eta}f_{\eta}||_{\infty}$}\label{subsec:approximated_iteration}
Let $\eta=1/524288$, we have that
\begin{equation*}
\begin{array}{cccc}
 N & ||\mathcal{L}^N_{\eta}f_{\eta}||_{{C}^1} & N & ||\mathcal{L}^N_{\eta}f_{\eta}||_{{C}^1}\\
 0 & 2.72 & 3 & 3.35\cdot 10^{-7} \\
 1 & 0.0007 & 4 & 1.07 \cdot 10^{-8} \\
 2 & 1.44\cdot 10^{-5} & 5 & 4.8\cdot 10^{-9}.
\end{array}
\end{equation*}
We observe now that
\[
||\mathcal{L}_{\eta}^k f||_{C^1}\leq 1.50002\cdot 0.261^k ||f||_{C^1}+1.42||f||_{\infty},
\]
which permits us to bound $||\mathcal{L}_{\eta}^k||_{C^1}$, and obtain that, for $6\leq N\leq 17$:
\[
||\mathcal{L}_{\eta}^N f_{\eta}||_{C^1} \leq 8.67\cdot 10^{-9}
\]

Therefore
\[
||\sum_{i=0}^{17}\mathcal{L}^i f_{\eta}-\sum_{i=0}^{17}\mathcal{L}^i_{\eta}f_{\eta}||_{\infty}\leq 0.0018 \gamma.
\]

\subsection{Estimating $||\sum_{i=0}^{l^*-1}\mathcal{L}^i(f_{\eta}-\hat{\mathcal{L}}h)||_{\infty}$}\label{subsec:approx_derivative}
Let $\eta=1/524288$, we computed using the discretization in Subsections \ref{subsec:approxLhatstochastic} and 7.5
an approximation $h_{\eta}$ of $h$ such that
\[
||h-h_{\eta}||_{{C}^1}\leq 0.00022.
\]
Therefore we have an approximation $f_{\eta}$ to $\hat{\mathcal{L}}h=\gamma h'$ such that
\[
||f_{\eta}-\gamma h'||_{\infty}\leq 0.00022\gamma.
\]
Therefore
\[
||\sum_{i=0}^{17}\mathcal{L}^i(f_{\eta}-\hat{\mathcal{L}}h)||_{\infty}\leq 0.0047\gamma.
\]
\subsection{The error on the computed response}
Therefore, the error on the response is
\[
||\hat{h}-\hat{h}_{appr}||_{\infty}\leq \gamma(0.00037+0.0018+0.0047)\leq 0.0069\gamma.
\]

\section{Linear response for deterministic perturbations}\label{sec:deterministic}
We now consider deterministic perturbations of a $C^3$ expanding circle map\footnote{After posting the first version of our work on Arxiv, which did not include an example of a deterministic perturbation,  Pollicott and Vytnova \cite{PV} studied the problem of approximating  linear response of given observables for deterministic perturbations. Their approach, which is based on the periodic point structure of the system, requires the maps to be analytic and the observables to have a certain structure. Here we show that our approach also works for deterministic perturbations, it requires only little regularity and no information on the observable.} $T_0$.  Let
\[T_{\epsilon}(x)=T_0(x)+\epsilon S(x)+ o_{C^3}(\epsilon);\]
where $S(x)\in C^3(\mathbb{T})$ and $o_{C^3}(\epsilon)$ is a term whose $C^3$ norm goes to $0$ faster than $\epsilon $ as $\epsilon$ goes to $0$. Under these assumptions (see for instance \cite{BahSau, Ba2, GaPo}), the operator
\[
\hat{\mathcal{L}}f(x)=-\PF_0\bigg(\frac{f\cdot S'}{T'_0}\bigg)(x)-\PF_0\bigg(\frac{f'\cdot S}{T'_0}\bigg)(x)+\PF_0\bigg(f\cdot S\cdot \frac{T''_0}{(T'_0)^2}\bigg)(x)
\]
satisfies
\begin{equation*}
\lim_{\epsilon \rightarrow 0}||\epsilon ^{-1}(\mathcal{L}-\mathcal{L}_{\epsilon })f-\mathcal{\hat{L}}f||_{C^{1}}=0~\forall f\in C^{3},
\end{equation*}
where $\mathcal{L}_{\epsilon}$ is the Perron-Frobenius operator associated to $T_{\epsilon}$.
\begin{remark}\label{deter}
If we suppose that the perturbation is small in the $C^2$ norm: $||T_0-T_\epsilon ||_{C^2}\leq K \epsilon$ it follows that
$\mathcal{L}_{\epsilon }$ and $\mathcal{L}$, satisfy a uniform Lasota-Yorke
inequality. This implies that assumption (1) of Proposition \ref{prop1}  holds.
Moreover, by  \cite{notes}, Section 6,  assumption (2) and  (3) of Proposition \ref{prop1} also hold.
Hence Proposition \ref{prop1} holds and we have the linear response for these perturbations.
\begin{equation*}
\lim_{\epsilon \rightarrow 0}||\frac{h_{\epsilon }-h}{\epsilon }-%
\hat{h}||_{\infty }=0,  
\end{equation*}
where $\hat{h}:=(\Id-\mathcal{L})^{-1}\mathcal{\hat{L}}h.$
\end{remark}

\subsection{An example of linear response under a deterministic perturbation}
In this example we study a family  $T_{\epsilon}$, $\epsilon \in [0,1)$ of  $C^3$-small deterministic  perturbations. We consider the family $T_{\epsilon}$:
\[
T_{\eps}(x)=2x+\frac{\epsilon }{16}(\cos(4\pi x)+\frac{1}{4}\cos(8\pi x)) \textrm{ mod $1$}.
\]

For $\epsilon=0$  the dynamics is given by the map
\[
T_0(x)=2x \textrm{ mod $1$};
\]
whose invariant density $h$ is constant and equals to $1$. 
The operator $\mathcal{L}_0$, associated to $T_0$, satisfies the following inequalities:
\begin{align*}
&\textrm{Var}(\mathcal{L}_0^k f)\leq \bigg(\frac{1}{2}\bigg)^k\textrm{Var}(f)\\
&M=||\mathcal{L}_0||_{\infty}\leq 1\\
&||\mathcal{L}_0^k f||_{C^1}\leq \bigg(\frac{1}{2}\bigg)^k ||f||_{C^1}+||f||_{\infty}.
\end{align*}
Note that the family satisfies the assumptions discussed in Remark \ref{deter}.
 Hence the linear response formula can be applied.
  Following \cite{GaPo}, the operator
$\hat{\mathcal{L}}:C^2\to C^1$ is given by
\begin{equation}\label{llll}
\hat{\mathcal{L}}h=\mathcal{L}_0\bigg(\frac{\pi}{8}\sin(4\pi x)+\frac{\pi}{16} \sin(8\pi x) \bigg)=\frac{\pi}{8}\sin(2\pi x)+\frac{\pi}{16}\sin(4\pi x)
\end{equation}
and, by Proposition \ref{prop1}, the linear response is given by
\[
\hat{h}=(\Id-\mathcal{L}_0)^{-1}\hat{\mathcal{L}}h=\sum_{i=0}^{+\infty}\mathcal{L}^i_0 \hat{\mathcal{L}}h.
\]

The simple structure of the example also  allow to compute the response exactly.

\begin{remark}\label{exact}
From direct computations we have that for all $\phi\in L^1$
\[
\mathcal{L}_0\phi(x)=\frac{\phi(\frac{x}{2})+\phi(\frac{x}{2}+\frac{1}{2})}{2}.
\]
Therefore, $\mathcal{L}_0\sin(2\pi x)=0$, $\mathcal{L}_0\sin(4\pi x)=\sin(2\pi x)$ and 
\[
\hat{h}=\sum_{i=0}^{+\infty}\mathcal{L}^i_0 \hat{\mathcal{L}}h=\hat{\mathcal{L}}h+\mathcal{L}_0\hat{\mathcal{L}}h=\frac{1}{16}\left(3\pi\sin(2\pi x)+\pi\sin(4\pi x)\right).
\]
\end{remark} 
We now approximate the linear response and estimate the error using the algorithm developed in the present paper
(refer to Theorem \ref{main} and the subsequent discussion). 
We will compute its linear response using the discretized operator and the general estimates introduced in Section \ref{noise}. 
Let us set the discretization parameter $\eta=1/4194304$. We have:
\begin{align*}
||\hat{\mathcal{L}}h||_{\infty}&\leq \frac{3\pi}{16};\\
||\hat{\mathcal{L}}h||_{C^1}&\leq \frac{3\pi}{16} +\frac{\pi^2}{2}.
 \end{align*}
As $\hat{\mathcal{L}}h$ is explicit (see Equation \ref{llll}) in the algorithm we use its discretization
$f_{\eta}=\Pi_{\eta}\hat{\mathcal{L}}h$.  Let us choose $l^*=57$ and approximate $\hat{h}$ by $$\hat{h}_{appr}=\sum_{i=0}^{56}\mathcal{L}_{\eta}f_{\eta}.$$ 
In Figure \ref{fig:linear_response_prescribed} we have a plot of the approximated linear response $\hat{h}_{appr}$.  

\bigskip

Now we apply the general procedure for the estimation of the error. 
\begin{figure}
 \includegraphics[width=50mm,height=50mm]{./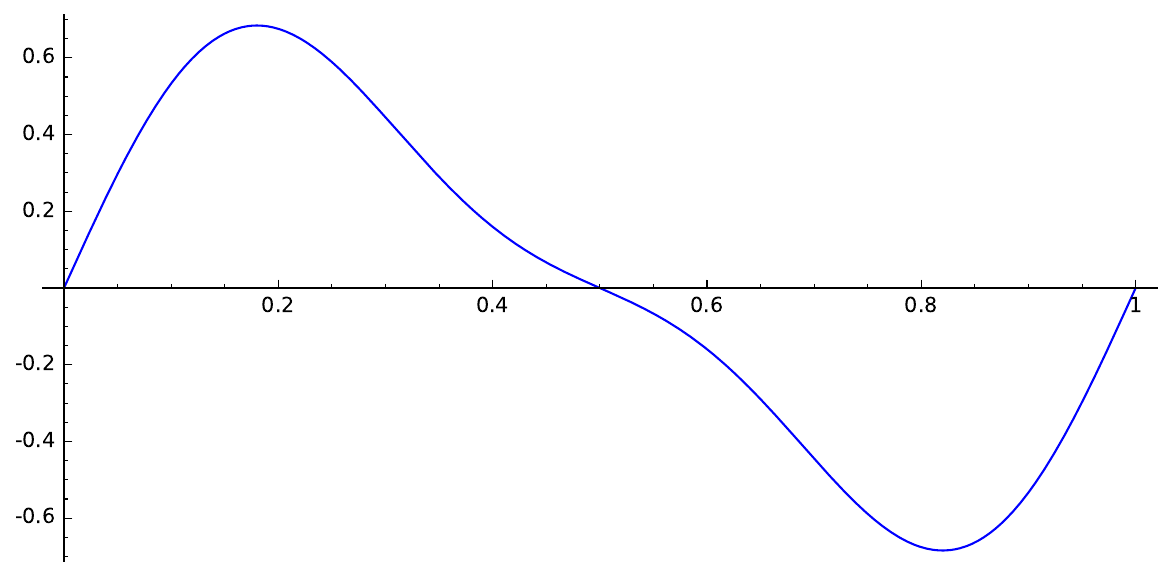}
 \caption{The plot of $\hat{h}_{appr}=\sum_{i=0}^{56}\mathcal{L}_{\eta}f_{\eta}$. }
 \label{fig:linear_response_prescribed}
\end{figure} 
As in the previous section, we have to estimate three summands:
\begin{align*}
&||\hat{h}-\hat{h}_{appr}||_{\infty}\leq \\
&||\sum_{i=l^*}^{+\infty}\mathcal{L}^i\hat{\mathcal{L}}h||_{\infty}+||\sum_{i=0}^{l^*-1}\mathcal{L}^i f_{\eta}-\sum_{i=0}^{l^*-1}\mathcal{L}^i_{\eta}f_{\eta}||_{\infty}+||\sum_{i=0}^{l^*-1}\mathcal{L}^i (f_{\eta}-\hat{\mathcal{L}}h)||_{\infty}.
\end{align*}
In the following  we are going to estimate the different summands separately.

\begin{remark}
The experiment can be done on SageMathCloud following the SAGE worksheets contained in the software archive:
\begin{itemize}
\item \textit{deterministic\_estimate\_tail.sagews} bounds the size of the tail $||\sum_{i=l^*}^{+\infty}\PF^i\hat{\PF}h||_{\infty}$;
\item \textit{deterministic\_final\_estimate.sagews} estimates the error on the linear response and computes the approximation.
\end{itemize}
\end{remark}

\subsection{Estimating $||\sum_{i=l^*}^{+\infty}\mathcal{L}^i\hat{\mathcal{L}}h||$}
Let us consider a coarse discretization $\eta=1/131072$ (see Remark \ref{rem:why_coarse}) 
let $\mathcal{L}_{\eta}$ be its discretized operator.  We have  by direct computation
\[
||\mathcal{L}_{\eta}^{19}|_{V_{C}^0}||_{\infty}\leq 0.00076
\]
where $V_C^0)=\{f \in {C}^1, \int f=0 \}$ is the set zero average densities. For all $f\in {C}^1$, by Lemma \ref{18}:
\[
||\mathcal{L}_{\eta}^{19}f-\mathcal{L}^{19}f||_{\infty}\leq 0.000144||f||_{{C}^1}+0.0079||f||_{\infty}.
\]
We can bound the speed of convergence to equilibrium and the associated constants 
using the technique and the notation explained in subsection \ref{sec:GNS}, with  $n_1=19$ .
For any $g$ in $V_{C^0}$, we have that, denoting by $g_i=\mathcal{L}^{19} g_{i-1}$:
\begin{equation*}
\left( 
\begin{array}{c}
||g_{i+1}||_{{C}^1} \\ 
||g_{i+1}||_{\infty}%
\end{array}%
\right) \preceq \left( 
\begin{array}{cc}
 1.91\cdot 10^{-6} & 1 \\
 0.000144 & 0.0079 
\end{array}
\right) \left( 
\begin{array}{c}
||g_{i}||_{{C}^1} \\ 
||g_{i}||_{\infty}%
\end{array}%
\right).
\end{equation*}
This gives the following estimates for all $f \in V$ 
\[
||\mathcal{L}^{19k}f||_{\infty}\leq 1.01 (0.0171)^k||f||_{{C}_1}\quad ||\mathcal{L}^{19k}f||_{\infty}\leq 120.1 (0.0171)^k||f||_{{C}_1}.
\]
Therefore,
\[
||\sum_{i=57}^{\infty}\mathcal{L}^i\hat{\mathcal{L}}h||_{\infty}\leq 0.00055.
\]
\subsection{Estimating $||\sum_{i=0}^{l^*-1}\mathcal{L}^i f_{\eta}-\sum_{i=0}^{l^*-1}\mathcal{L}^i_{\eta}f_{\eta}||_{\infty}$}
Let $\eta=1/4194304$, we have that
\begin{equation*}
\begin{array}{cccc}
 N & ||\mathcal{L}^N_{\eta}f_{\eta}||_{{C}^1} & N & ||\mathcal{L}^N_{\eta}f_{\eta}||_{{C}^1}\\
 0 & 7.92 & 2 & 6.42\cdot 10^{-10}\\
 1 & 2.047 & 3 & 3.24\cdot 10^{-10}.
\end{array}
\end{equation*}
We observe now that
\[
||\mathcal{L}_{\eta}^k f||_{C^1}\leq 1.6\cdot 0.76^k ||f||_{C^1}+1||f||_{\infty},
\]
which permits us to bound $||\mathcal{L}_{\eta}^k||_{C^1}$, and obtain that, for $4\leq N\leq 56$:
\[
||\mathcal{L}_{\eta}^N f_{\eta}||_{C^1} \leq 3.31\cdot 10^{-10}
\]
Therefore,
\[
||\sum_{i=0}^{56}\mathcal{L}^i f_{\eta}-\sum_{i=0}^{56}\mathcal{L}^i_{\eta}f_{\eta}||_{\infty}\leq  0.0019
\]
\subsection{Estimating $||\sum_{i=0}^{l^*-1}\mathcal{L}^i(f_{\eta}-\hat{\mathcal{L}}h)||_{\infty}$}
Let $\eta=1/4194304$, we have that
\[
||f_{\eta}-\hat{\mathcal{L}}h||_{\infty}=||\Pi_{\eta}(\hat{\mathcal{L}}h)-\hat{\mathcal{L}}h||_{\infty}\leq \frac{5}{2}\frac{1}{4194304}\frac{\pi^2}{2}\leq 2.96\cdot 10^{-6}.
\]
Therefore,
\[
||\sum_{i=0}^{56}\mathcal{L}^i(f_{\eta}-\hat{\mathcal{L}}h)||_{\infty}\leq 0.00017.
\]
\subsection{The error on the computed response}
Therefore, 
\[
||\hat{h}-\hat{h}_{appr}||_{\infty}\leq 0.00055+0.0019+0.00017\leq 0.0026.
\]
\begin{remark}
We expect our approximation scheme to be more efficient than the rigorous bounds.
Since we computed $\hat{h}$ explicitly in Remark \ref{exact}, we compare
our approximation directly with the explicit linear response obtaining:
\[
||\hat h_{appr}-\Pi_{\eta}\hat{h}||_{C^0}\leq 3.42\cdot 10^{-13},\quad ||\Pi_{\eta}\hat{h}-\hat{h}||_{C^0}\leq \frac{15.5}{4194304}.
\]
This shows the efficiency of our approximation scheme.
\end{remark}
\section{Appendix I: the response of the intermittent family at the boundary}\label{appendix I} 
Let $T_{\alpha}:[0,1]\to [0,1]$ be the family of maps defined by
\begin{equation}\label{LSV}
T_{\alpha}(x)=\left\{
\begin{array}{cc}
x(1+2^{\alpha}x^{\alpha}) & 0\leq x\leq 1/2\\
2x-1 & 1/2< x\leq 1,
\end{array}
\right.
\end{equation}
where $\alpha\in[0,\infty)$. This family which was initially 
popularized in \cite{LSV} as a version of the Pomeau-Manneville family \cite{PM}. 
When $\alpha\in(0,\infty)$, this family of maps of the interval $[0,1]$ has an indifferent fixed point at the origin, 
and when $\alpha\in (0,1)$ it has a unique absolutely continuous invariant probability measure and it exhibits only a 
polynomial decay of correlations \cite{G} with respect to H\"older observables. When $\alpha=0$, it is the doubling map, 
which is uniformly expanding, it preserves Lebesgue measure and it exhibits exponential decay of correlations. In this appendix,
we compute the linear response at $\alpha=0$.\\

In general for $\alpha\in[0,\infty)$, the transfer operator associated with $T_\alpha$ has a unique (up to multiplication by a constant) fixed point $h_\alpha\in L^1([0,1])$. The following result was proved in \cite{BT}. In particular, the linear response formula at $\alpha=0$ in Proposition \ref{atz} below was derived in equation (2.22) of \cite{BT}.
\begin{proposition}\label{atz}
Let $T_{\alpha}$ be as in \eqref{LSV} with $\alpha\in [0,1)$.
\begin{enumerate}
\item For $\alpha\in(0,1)$, $q> \frac{1}{1-\alpha}$ and $\psi\in L^q$
\begin{equation}\label{BTR}
\lim_{\eps\to 0}\bigg|\frac{\int \psi h_{\alpha+\eps}dx-\int \psi h_{\alpha}dx}{\eps}-\int\psi \hat h_{\alpha}dx\bigg|=0,
\end{equation}
where $\hat h_{\alpha}=-(\Id- \mathcal L_{\alpha})^{-1}[(X_\alpha\mathcal N_\alpha h_{\alpha})']$, with 
$X_\alpha=2^{\alpha}g_{0,\alpha}^{1+\alpha}(x)\ln(2g_{0,\alpha}(x))$, $\mathcal N_\alpha \varphi=g'_{0,\alpha}(x)\varphi(g_{0,\alpha}(x))$.
\item The result also holds for  $\alpha=0$ by taking the limit as $\eps\downarrow 0$. The formula of $\hat h:=\hat h_0$ is given by
\begin{equation}\label{eq:lin_resp_formula_for_LSV}
\hat{h}=(\Id-\mathcal{L}_0)^{-1}\hat{\mathcal{L}}h_0, 
\end{equation}
where
\begin{equation}\label{it:hatL}
\hat{\mathcal{L}}h_0=-1/4-\ln(x)/4 \text{ and } h_0\equiv 1.
\end{equation}
\end{enumerate}
\end{proposition}
We now start the procedure of approximating $\hat h$ rigorously in $L^1([0,1])$. In this specific case, it is possible to find some explicit bounds that allow us to approximate the linear response. Note that approximating $\hat h$ in $L^1([0,1])$ will directly provide an approximation of $\int\psi \hat h dx$, for any $\psi\in L^{\infty}$. Since some of the techniques differ from those used in the rest of the paper, we decided to present this example as an appendix. Note that for $T_0(x)=2x \textrm{ mod $1$}$; the transfer operator on $L^1([0,1])$ associated to this dynamical
system has the explicit form
\[
\mathcal{L}_0 f(x)=\frac{f(x/2)+f(x/2+1/2)}{2},
\]
and the density of the absolutely continuous invariant measure is $h_0\equiv 1$;
those are important ingredients in our estimates.

\begin{remark}
The experiment can be done can be done on Cocalc following the SAGE worksheets contained in the software archive:
\textit{lsv\_at\_boundary.sagews} estimates the error on the linear response and computes the approximation.
\end{remark}

\begin{definition}
Let $\{I_i\}_{i=0}^{n-1}$ be a uniform partition of $[0,1]$ consisting of intervals of size $\eta=1/n$, denote
by $m$ the Lebesgue measure. Let $\pi_{\eta}$ be the finite rank operator defined on $L^1([0,1])$ as follows:
\[
\pi_{\eta}f(x)=\sum_{i=0}^{n-1}\frac{\int_{I_i}f\,dm}{m(I_i)}\chi_{I_i}(x),
\]
where $\chi_{I_i}$ is the characteristic function of $I_i$. The Ulam approximation of $\mathcal{L}_0$ of mesh size $\eta$ is
\[
\mathcal{L}_{0,\eta}:=\pi_{\eta}\mathcal{L}\pi_{\eta}.
\]
\end{definition}
We summarize in the next lemma some properties of $\pi_{\eta}$,
$\mathcal{L}_0$ and $\mathcal{L}_{0,\eta}$ used in this appendix;
we refer to \cite{B1,GN} and references therein for proofs of
these results.

\begin{lemma}\label{lemma:pf}
Let $T_0(x)=2x \textrm{ mod $1$}$, let $\mathcal{L}_0$ be the associated transfer operator,
and let $\mathcal{L}_{0,\eta}$ be the Ulam approximation of size $\eta$.
Then
\begin{align*}
||g-\pi_{\eta}g||_{L^1}&\leq \eta\cdot \Var(g)\\
||\PF_0||_{L^1}\leq 1 &\quad ||\PF_{0,\eta}||_{L^1}\leq 1\\
 \Var(\PF_0 g)\leq \frac{1}{2}\Var(g) &\quad\Var(\PF_{0,\eta} g)\leq \frac{1}{2}\Var(g),
\end{align*}
for all $g\in BV([0,1])$.
\end{lemma}

We now approximate $\hat h$, which was defined in equations \eqref{eq:lin_resp_formula_for_LSV} and \eqref{it:hatL}, with a rigorous error bound in $L^1([0,1])$. 
\begin{proposition}\label{prop:summary}
Let $\mathcal{L}_{0,\eta}$ 
be the Ulam approximation of $\mathcal{L}_0$ with mesh size $\eta$.
Let $\eta=2^{-20}$, $f_{\eta}=\pi_{\eta}\hat{\mathcal{L}} h_0$ and $\hat{h}_{appr}=\sum_{i=0}^{19}\mathcal{L}_{\eta}f_{\eta}$. Then,
\[
||\hat{h}_{appr}-\hat{h}||_{L^1}\leq 0.00021.
\]
\end{proposition}
\begin{proof}
To obtain the rigorous bound for $||\hat{h}_{appr}-\hat{h}||_{L^1}$, note that
\begin{align*}
||\hat{h}_{appr}-\hat{h}||_{L^1}&=||\sum_{i=0}^{19}\PF_{0,\eta}^i f_{\eta}-(\Id-\PF_0)^{-1}\hat{\PF}h_0||_{L^1}\leq \\
||\sum_{i=0}^{19}\mathcal{L}_0^i(f_{\eta}-&\hat{\mathcal{L}}h_0)||_{L^1}+||\sum_{i=0}^{19}\mathcal{L}_0^i f_{\eta}-\sum_{i=0}^{19}\mathcal{L}^i_{0,\eta}f_{\eta}||_{L^1}+||\sum_{i=20}^{+\infty}\mathcal{L}^i_0\hat{\mathcal{L}}h_0||_{L^1}.
\end{align*}

We will now give an explicit estimate for $||\mathcal{L}^k_0\hat{\mathcal{L}}h_0||_{L^1}$. By induction we have 
\begin{align*}
|\mathcal{L}^k_0\hat{\mathcal{L}}h_0(x)|\leq |\bigg(-\frac{1}{4}-\frac{1}{4}\cdot \bigg(\frac{\ln(\prod_{j=1}^{2^k-1} (x+j))}{2^k}-\ln(2^k)\bigg)\bigg)|+|\frac{\ln(x)}{2^{k+2}}|.
\end{align*}
Note that if $x\in[0,1]$, we have that:
\[
(2^k-1)! \leq \prod_{j=1}^{2^k-1} (x+j)\leq 2^k!;
\]
we can use Stirling's Formula for $n!$,  \cite{Robbins}:
\[
\ln(\sqrt{2\pi})+(n+\frac{1}{2})\ln(n)-n+\frac{1}{12n+1}\leq \ln(n!)\leq \ln(\sqrt{2\pi})+(n+\frac{1}{2})\ln(n)-n+\frac{1}{12n}.
\]
Thus:
\begin{align*}
\bigg|1+\frac{\ln(\prod_{j=1}^{2^k-1} (x+j))}{2^k}-\ln(2^k)\bigg|\leq
\frac{\ln(\sqrt{2\pi})}{2^k}+\frac{\ln(2^k)}{2^{k+1}}+\frac{1}{2^{2k}12}, 
\end{align*}
which in turn implies that
\begin{align*}
||\mathcal{L}^k_0\hat{\mathcal{L}}h_0(x)||_{L^1}\leq \frac{1}{2^{k+2}}\cdot \bigg(1+\ln(\sqrt{2\pi})+\frac{\ln(2^k)}{2}+\frac{1}{12\cdot 2^k}\bigg).
\end{align*}
Therefore,
\begin{equation}\label{eq:tail}
||\sum_{i=20}^{+\infty}\mathcal{L}^i_0\hat{\mathcal{L}}h_0||_{L^1}\leq \frac{1}{2^{20+1}}(1+\ln(\sqrt{2\pi}))+\frac{20+1}{2^{20+1}}\frac{\ln(2)}{2}+\frac{1}{9\cdot 2^{40+2}},
\end{equation}
and consequently,
\[||\sum_{i=20}^{+\infty}\mathcal{L}^i_0\hat{\mathcal{L}}h_0||_{L^1}\leq 4.4\cdot 10^{-6}.\]

We estimate $||f_{\eta}-\hat{\mathcal{L}}h_0||_{L^1}$ explicitly:
\begin{align*}
\int_0^{\eta}|\ln(x)-\frac{\int_0^{\eta} \ln(x)dx}{\eta}|dx&=\frac{2\eta}{e};
\end{align*}
for each interval $[i\eta,(i+1)\eta]$, $i\in 1,\ldots,1/\eta-1$ we have that
\begin{align*}
\int_{i\eta}^{(i+1)\eta}|\ln(x)-\frac{1}{\eta}\int_{i\eta}^{(i+1)\eta}\ln(\xi)d\xi|dx\leq \eta(\ln(i\eta)-\ln((i+1)\eta)).
\end{align*}
Therefore:
\[
||f_{\eta}-\hat{\mathcal{L}}h_0||_{L^1}\leq \frac{\eta}{2e}-\frac{\eta\ln(\eta)}{4}.
\]
Since $||\mathcal{L}_0||_{L^1}=1$ we have 
\begin{equation}\label{eq:othtail}
||\sum_{i=0}^{19}\mathcal{L}_0^i(f_{\eta}-\hat{\mathcal{L}}h_0)||_{L^1}\leq 20 \bigg(\frac{\eta}{2e}-\frac{\eta\ln(\eta)}{4}\bigg).
\end{equation}
Fix $\eta=2^{-20}$; therefore using \eqref{eq:othtail} we have:
\[
||\sum_{i=0}^{19}\mathcal{L}_0^i(f_{\eta}-\hat{\mathcal{L}}h_0)||_{L^1}\leq 6.9\cdot{10^{-6}}
\]

We bound now $||\sum_{i=0}^{19}\mathcal{L}_0^i f_{\eta}-\sum_{i=0}^{19}\mathcal{L}^i_{0,\eta}f_{\eta}||_{L^1}$.
We make and ``a priori'' estimate, using the fact that $\eta=1/n$ and $\ln(x)$ is decreasing:
\begin{align*}
\Var(f_{\eta})=\frac{|\int_0^{\eta}\hat{\mathcal{L}}h_0\, dx-\int_{(n-1)\eta}^1\hat{\mathcal{L}}h_0\, dx|}{\eta}=\frac{-\ln(\eta)-(n-1)\ln((n-1)\eta)}{4}
\end{align*}
From Lemma \ref{lemma:pf}
\[
||(\mathcal{L}_0-\mathcal{L}_{0,\eta})f||_{L^1}\leq \frac{3\eta}{2}\Var(f),
\]
and
\begin{align}
||\sum_{i=0}^{19}\mathcal{L}_0^i f_{\eta}-\sum_{i=0}^{19}\mathcal{L}^i_{0,\eta}f_{\eta}||_{L^1}
&\leq \sum_{j=0}^{19} \sum_{i=0}^j\frac{3\eta}{2}\bigg(\frac{1}{2}\bigg)^i\Var(f_{\eta})\nonumber \\
&\leq \frac{3\eta}{2} \bigg(2\cdot 20- \frac{2^{20}-1}{2^{19}}\bigg)\Var(f_{\eta}).\label{eq:diffiterate}
\end{align}
Using \eqref{eq:diffiterate}, since for $\eta=2^{-20}$ $\Var(f_{\eta})\leq 3.72$, we have the following bound:
\[
||\sum_{i=0}^{19}\mathcal{L}_0^i f_{\eta}-\sum_{i=0}^{19}\mathcal{L}^i_{0,\eta}f_{\eta}||_{L^1}\leq 0.000203.
\]

Summing up the errors we obtain:
\[
||\hat{h}_{appr}-\hat{h}||_{L^1}\leq 0.00021.
\]
\end{proof}
In Figure \ref{fig:linear_response_manneville}, we depict the graph of the computed approximation of the linear response.
\begin{figure}
 \includegraphics[width=50mm,height=50mm]{./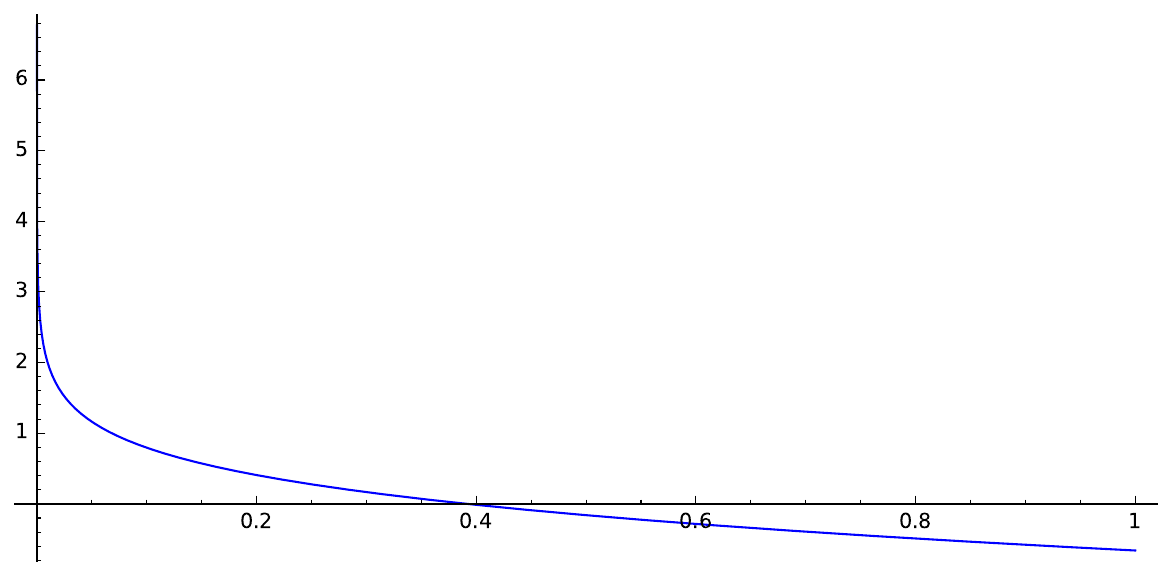}
 \caption{The plot of $\hat{h}_{appr}=\sum_{i=0}^{19}\mathcal{L}_{\eta}f_{\eta}$.}
 \label{fig:linear_response_manneville}
\end{figure}
\section{Appendix II: some estimates and technical lemmas}\label{appendix II}
Throughout subsections 7.1-7.5 we use the following setup. $T:\mathbb{T}\rightarrow \mathbb{T}$ is a $C^{3}$ uniformly expanding
circle map; i.e. $\inf_{x\in \mathbb{T}}|D_{x}T|>1$. Without loss of generality we assume that $T$ is orientation preserving. The circle map $T$ it is naturally associated with an 
expanding interval map, which we also denote by $T$, $T:[0,1]\to [0,1]$. Throughout the presentation, we use the interval map representation $T:[0,1]\to [0,1]$. Recall that $\mathcal L$ denotes the transfer operator associated with $T$ (see \eqref{PF}). In addition, the following constants will be used extensively throughout subsections 7.1-7.5. We set
$$\lambda :=1/\inf_{x\in \mathbb{T}}|D_{x}T|; \quad B:=||T^{\prime \prime }/(T^{\prime })^{2}||_{\infty };\quad M:=1+\frac{B}{1-\lambda};$$ 
\[
Z:= \frac{1}{1-\lambda^2}\bigg(\bigg|\bigg| \frac{T'''}{(T')^3}\bigg|\bigg|_{\infty}+
\frac{3\lambda}{1-\lambda}\bigg|\bigg| \frac{T''}{(T')^2}\bigg|\bigg|_{\infty}^2\bigg);
\]
and
\begin{equation*}
D=\max\{3\frac{\lambda BM}{1-\lambda},3M\bigg(\frac{B}{1-\lambda}\bigg)^2+MZ\}+M\lambda+M^2.
\end{equation*}
Finally, we denote the $k^{\text{th}}$ iterate of $T$ by $G_k$; i.e. for $k\ge 1$ we write $G_k:=T^k$.
\subsection{Useful estimates}
The following Lemma provides bounds on the distortion for iterates of $T$.
These bounds will be used in the proofs of the Lasota-Yorke inequalities in Subsection \ref{sec:LY}. 

\begin{lemma}\label{lemma:couple_estimates}
For any $k\ge 1$, we have 
\[
\bigg|\bigg| \frac{G_k''}{(G_k')^2}\bigg|\bigg|_{\infty}\leq \frac{B}{1-\lambda};\quad\text{ and }\quad \bigg|\bigg| \frac{G_k'''}{(G_k')^3}\bigg|\bigg|_{\infty}\leq Z.
\]
\end{lemma}
\begin{proof}
Write $G_k(x)=T(G_{k-1}(x))$. Then
\begin{align*}
 G_k'(x)&=T'(G_{k-1}(x))G_{k-1}'(x),\\
 G_k''(x)&=T''(G_{k-1}(x))(G_{k-1}'(x))^2+T'(G_{k-1}(x))G_{k-1}''(x).
\end{align*}
Using these two expressions we have
\begin{align*}
\frac{G''_k(x)}{(G_k'(x))^2}=\frac{T''(G_{k-1}(x))}{(T'(G_{k-1}(x)))^2}+\frac{1}{T'(G_{k-1}(x))}\frac{G_{k-1}''(x)}{(G_{k-1}'(x))^2},
\end{align*}
which implies the first inequality. We now compute
\begin{align*}
 G_k'''(x)&=T'''(G_{k-1}(x))(G_{k-1}'(x))^3+3 T''(G_{k-1}(x))G_{k-1}'(x)G_{k-1}''(x)\\&+T'(G_{k-1}(x))G_{k-1}'''(x).
\end{align*}
Using this last expression and the computations above we have:
\begin{align*}
\frac{G_k'''(x)}{(G_k'(x))^3}&=\frac{T'''(G_{k-1}(x))}{(T'(G_{k-1}(x)))^3}+3\frac{1}{T'(G_{k-1}(x))}\frac{T''(G_{k-1}(x))}{(T'(G_{k-1}(x)))^2}\frac{G_{k-1}''(x)}{(G_{k-1}'(x))^2}\\
&+\frac{1}{(T'(G_{k-1}(x)))^2}\frac{G_{k-1}'''(x)}{(G_{k-1}'(x))^3},
\end{align*}
which implies the second inequality of the lemma.
\end{proof}
\subsection{Lasota-Yorke inequalities\label{sec:LY}}
In this subsection we prove Lasota-Yorke inequalities when $\mathcal{L}$ acts on $C^1([0,1])$ and on $C^2([0,1])$. The following proposition is a well known result. See for instance \cite{Li2} Lemma 1.2 for a similar statement. 
\begin{proposition}
\label{LYvar} Let $\text{Var}(\cdot)$ denote the one dimensional variation on $[0,1]$. Then for any function of bounded variation $f$ we have 
\begin{equation*}
\text{Var}(\mathcal{L}f )\leq \lambda \text{Var}(f )+B||f||_{L^{1}}.
\end{equation*}
\end{proposition}
\begin{lemma}[Uniform bound on $||\mathcal{L}^{n}||_{\infty }$]
\label{M}For any $n\ge 1$ we have 
\begin{equation*}
||\mathcal{L}^{n}||_{\infty }\leq M.
\end{equation*}
\end{lemma}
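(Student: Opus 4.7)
The plan is to reduce the bound on the operator norm to a bound on $\mathcal{L}^n\mathbf{1}$, and then estimate the latter using the variation inequality from Proposition \ref{LYvar}.

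First I would exploit positivity of the Perron-Frobenius operator. For any $f$ with $||f||_{\infty}\le 1$ we have $|\mathcal{L}^n f(x)|\le \mathcal{L}^n|f|(x)\le \mathcal{L}^n\mathbf{1}(x)$ pointwise, hence
\[
||\mathcal{L}^n||_{L^{\infty}\to L^{\infty}}\le ||\mathcal{L}^n\mathbf{1}||_{\infty}.
\]
So it suffices to prove $||\mathcal{L}^n\mathbf{1}||_{\infty}\le 1+B/(1-\lambda)$.

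Next, I would iterate Proposition \ref{LYvar}. Applied to the measure $\mu = f\,dx$ with $f=\mathcal{L}^{n-1}\mathbf{1}$ and using that $\mathcal{L}$ preserves the $L^1$ norm of non-negative densities (so $||\mathcal{L}^{k}\mathbf{1}||_{L^1}=1$ for every $k$), a straightforward induction gives
\[
\mathrm{Var}(\mathcal{L}^n\mathbf{1})\le \lambda^n \mathrm{Var}(\mathbf{1}) + B\sum_{k=0}^{n-1}\lambda^k \le \frac{B}{1-\lambda}.
\]

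Finally, I would use the standard fact that a non-negative function $g$ on $\mathbb T$ with integral $1$ satisfies $||g||_{\infty}\le 1 + \mathrm{Var}(g)$: indeed, since $g\ge 0$ and $\int g\,dx = 1$, there is a point $x_0$ with $g(x_0)\le 1$, and then for any $x$,
\[
g(x)\le g(x_0) + |g(x)-g(x_0)|\le 1+\mathrm{Var}(g).
\]
Applied to $g=\mathcal{L}^n\mathbf{1}$ this yields $||\mathcal{L}^n\mathbf{1}||_{\infty}\le 1+B/(1-\lambda)=M$, and combined with the first reduction we obtain $||\mathcal{L}^n||_{\infty}\le M$ uniformly in $n$.

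No part of this is really an obstacle; the only mild point of care is justifying $||g||_{\infty}\le \int g + \mathrm{Var}(g)$, which uses positivity in an essential way (otherwise one would only control oscillation, not $L^{\infty}$ norm). The estimate is clean because the $L^1$ normalisation of $\mathcal{L}^n\mathbf{1}$ is preserved exactly, letting the geometric series in $\lambda$ absorb into the single constant $B/(1-\lambda)$.
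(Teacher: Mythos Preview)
Your proof is correct and follows essentially the same approach as the paper: reduce to $\mathcal{L}^n\mathbf{1}$ by positivity, iterate the variation inequality of Proposition~\ref{LYvar} using $||\mathcal{L}^k\mathbf{1}||_{L^1}=1$, and conclude via $||g||_{\infty}\le 1+\mathrm{Var}(g)$ for a probability density. You have in fact spelled out more detail than the paper, which leaves the iteration and the last step implicit.
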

\begin{proof}
The operator $\mathcal{L}$ is positive. Therefore $||\mathcal{L}%
^{n}||_{\infty }=\sup_{x\in [0,1]}\mathcal{L}^{n}1.$ By Proposition \ref%
{LYvar}, we have%
\begin{equation*}
\text{Var}(\mathcal{L}^{n}1)\leq \frac{B}{1-\lambda },
\end{equation*}%
and therefore $||\mathcal{L}^{n}1||_{\infty }\leq M$.
\end{proof}

\begin{proposition}
\label{Th:SwLY} For $f\in C^1([0,1])$ and any $n\ge 1$ we have
\begin{equation*}
||\mathcal{L}^n f||_{{C}^{1}}\leq M\cdot \lambda^n ||f||_{{C}%
^{1}}+M^2||f||_{\infty }.
\end{equation*}%
In particular, there exists an iterate $G:=T^n$ of $T$ such that
\[
||\mathcal{L}_G f||_{{C}^{1}}\leq \theta ||f||_{{C}%
^{1}}+M^2||f||_{\infty },
\]
where $\theta\leq\lambda^n M<1$.
\end{proposition}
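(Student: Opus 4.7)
The plan is to express the derivative of $\mathcal{L}^n f$ as a combination of transfer-operator evaluations on rescaled data, and then apply the uniform $L^\infty$ bound from Lemma \ref{M} together with the distortion estimate from Lemma \ref{lemma:couple_estimates}.

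Set $G := T^n$, so that $\mathcal{L}^n f(x) = \sum_{y \in G^{-1}(x)} f(y)/G'(y)$. Differentiating term by term, using the fact that each local inverse $y(x)$ satisfies $y'(x) = 1/G'(y)$, one obtains the identity
\begin{equation*}
(\mathcal{L}^n f)'(x) = \mathcal{L}^n\!\left(\frac{f'}{G'}\right)(x) - \mathcal{L}^n\!\left(f \cdot \frac{G''}{(G')^2}\right)(x).
\end{equation*}
This is the key algebraic step, since it reduces differentiation to the action of $\mathcal{L}^n$ on functions whose $L^\infty$ norms we can control.

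Taking sup norms and applying $||\mathcal{L}^n g||_\infty \leq M ||g||_\infty$ from Lemma \ref{M}, together with the pointwise bound $1/|G'| \leq \lambda^n$ and the distortion estimate $||G''/(G')^2||_\infty \leq B/(1-\lambda)$ from Lemma \ref{lemma:couple_estimates}, I would get
\begin{equation*}
||(\mathcal{L}^n f)'||_\infty \leq M \lambda^n ||f'||_\infty + \frac{MB}{1-\lambda}||f||_\infty.
\end{equation*}
Combining this with $||\mathcal{L}^n f||_\infty \leq M ||f||_\infty$ and writing $M ||f||_\infty = M\lambda^n ||f||_\infty + M(1-\lambda^n)||f||_\infty$ so as to regroup the $\lambda^n$-small part with $||f'||_\infty$ into a single $||f||_{C^1}$ term, one arrives at
\begin{equation*}
||\mathcal{L}^n f||_{C^1} \leq M\lambda^n ||f||_{C^1} + \left(M + \frac{MB}{1-\lambda}\right)||f||_\infty,
\end{equation*}
which is precisely the claimed inequality with $C = M + BM/(1-\lambda)$.

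For the final assertion, since $0 < \lambda < 1$ and $M$ is a fixed constant depending only on $T$, one may choose $k \in \mathbb{N}$ large enough so that $\theta := M \lambda^k < 1$, and set $G = T^k$; the inequality just proved, specialised to $n = k$, then reads $||\mathcal{L}_G f||_{C^1} \leq \theta ||f||_{C^1} + C ||f||_\infty$, which is a genuine Lasota--Yorke inequality. The only step requiring mild care is the differentiation-under-the-sum identity; the rest is assembling the pieces already supplied by Lemmas \ref{M} and \ref{lemma:couple_estimates}, so I do not anticipate a serious obstacle.
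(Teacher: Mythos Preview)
Your proof is correct and follows essentially the same route as the paper: set $G=T^n$, differentiate the transfer-operator sum to express $(\mathcal{L}^n f)'$ in terms of $\mathcal{L}^n$ applied to $f'/G'$ and $f\cdot G''/(G')^2$, then apply Lemma~\ref{M} and the distortion bound from Lemma~\ref{lemma:couple_estimates}, and finally regroup the $||\mathcal{L}^n f||_\infty$ term to absorb $M\lambda^n||f||_\infty$ into the $C^1$ piece. The only cosmetic difference is that the paper writes the derivative as an explicit sum over preimages rather than as $\mathcal{L}^n$ applied to auxiliary functions, but the two expressions are identical.
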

\begin{proof}
For $x\in (0,1)$ we have 
\begin{equation}
\frac{\partial }{\partial x}(\mathcal{L}^n f)(x)=\frac{\partial }{\partial x}%
\bigg(\sum_{y\in G^{-1}(x)}\frac{f(y)}{G^{\prime }(y)}\bigg)=\sum_{y\in G^{-1}(x)}%
\frac{f^{\prime }(y)}{(G^{\prime }y)^{2}}-f(y)\frac{G^{\prime \prime }(y)}{%
(G^{\prime }y)^{2}}\frac{1}{G^{\prime }(y)}.  \label{former}
\end{equation}%
By Lemma \ref{M}, Lemma \ref{lemma:couple_estimates} and (\ref{former}) 
\begin{equation}
||(\mathcal{L}^nf)^{\prime }||_{\infty }\leq \lambda^n ||\mathcal{L}^nf^{\prime
}||_{\infty }+ \bigg|\bigg|\frac{G^{\prime \prime }}{(G^{\prime })^{2}}%
\bigg|\bigg|_{\infty }||\mathcal{L}^n f||_{\infty }
\leq M \lambda^n ||f^{\prime }||_{\infty }+\frac{BM}{1-\lambda}||f||_{\infty }%
.  \label{me1}
\end{equation}%
Therefore, by \eqref{me1} and Lemma \ref{M}, we have 
\begin{equation*}
\begin{split}
||\mathcal{L}^n f||_{{C}^{1}}& =||\mathcal{L}^n f||_{\infty }+||(\mathcal{L}^n f)^{\prime }||_{\infty } \\
& \leq M||f||_{\infty }+M \lambda^n ||f^{\prime }||_{\infty
}+\frac{BM}{1-\lambda}||f||_{\infty } \\
& \leq M \lambda^n ||f||_{{C}^{1}}+M^2||f||_{\infty }.
\end{split}%
\end{equation*}
\end{proof}

\begin{proposition}\label{prop:C1L1}
For $f\in C^1$, we have
\begin{equation*}
||(\mathcal{L} f)'||_{\infty}\leq (M+B)\lambda||f'||_{\infty}+B(B+1)||f||_{L^1},
\end{equation*}
\end{proposition}
\begin{proof}
From Proposition \ref{LYvar} and the fact that $||f||_{\infty}\leq ||f||_{L^1}+\Var(f)$
we have
\begin{align*}
||\PF f||_{\infty}\leq \Var(\PF f)+||\PF f||_{L^1}\leq \lambda \textrm{Var}(f)+(B+1)||f||_{L^1}\\
\leq \lambda ||f'||_{\infty}+(B+1)||f||_{L^1}.
\end{align*}
By \eqref{me1} and the above estimate we have
\begin{align*}
||(\PF f)'||_{\infty}\leq M\lambda||f'||_{\infty}+B ||\PF f||_{\infty}\leq (M+B)\lambda ||f'||_{\infty}+B(B+1) ||f||_{L^1}.
\end{align*}
\end{proof}
\begin{proposition}
\label{Th:SSLY} For $f\in C^2([0,1])$ and any $n\ge 1$ we have
\begin{equation*}
||\mathcal{L}^n f||_{{C}^{2}}\leq M (\lambda^2)^n ||f||_{{C}
^{2}}+D||f||_{{C}^{1}}.
\end{equation*}
In particular, there exists an iterate $G:=T^k$ of $T$ such that
\[
||\mathcal{L}_G f||_{{C}^{2}}\leq \Lambda ||f||_{{C}^{2}}+D||f||_{{C}^{2}},
\]
where $\Lambda\leq\lambda^{2k} M<1$.
\end{proposition}
\begin{proof}
We denote $G:=T^n$. For $x\in (0,1)$ we have 
\begin{align*}
(\mathcal{L}^n f)^{\prime \prime }(x)=& \sum_{y\in G^{-1}(x)}\frac{f^{\prime
\prime }(y)}{(G^{\prime }(y))^{3}}-3f^{\prime }(y)\frac{G^{\prime \prime }(y)%
}{(G^{\prime }(y))^{4}} \\
& +\sum_{y\in G^{-1}(x)}-f(y)\frac{G^{\prime \prime \prime }(y)}{(G^{\prime
}(y))^{4}}+3f(y)\frac{(G^{\prime \prime }(y))^{2}}{(G^{\prime }(y))^{5}}.
\end{align*}
Therefore, 
\begin{align}\label{eq:really_useful}
& ||(\mathcal{L}^n f)^{\prime \prime }||_{\infty }\leq \lambda ^{2n}||\mathcal{L}^n%
(f^{\prime \prime })||_{\infty }+3\lambda^n \frac{B}{1-\lambda}||\mathcal{L}^n(f^{\prime
})||_{\infty } \\ \nonumber
& +3\bigg(\frac{B}{1-\lambda}\bigg)^{2}||\mathcal{L}^n f||_{\infty }+Z ||\mathcal{L}^n f||_{\infty }.
\end{align}%
In particular
\[
||(\mathcal{L}^n f)^{\prime \prime }||_{\infty }\leq M\lambda ^{2n}||f^{\prime \prime }||_{\infty}
+\max\{3\frac{\lambda^n BM}{1-\lambda},3M\bigg(\frac{B}{1-\lambda}\bigg)^2+MZ\}||f||_{{C}^{1}}.
\]
Thus, by Proposition \ref{Th:SwLY}, we get 
\begin{align*}
||\mathcal{L}^nf||_{{C}^{2}}& =||\mathcal{L}^nf||_{{C}^{1}}+||(\mathcal{L}^nf)^{\prime \prime }||_{\infty } \\
&\leq M\lambda^{2n}||f||_{{C}^{2}}\\
&+\bigg(\max\{3\frac{\lambda^n BM}{1-\lambda},3M\bigg(\frac{B}{1-\lambda}\bigg)^2+MZ\}+M\lambda^n(1-\lambda^{n})+M^2\bigg)||f||_{{C}^{1}}.
\end{align*}
\end{proof}
\subsection{Uniform Lasota-Yorke inequality for $\PF_{\eta}$}\label{subsec:uniform_LY_C1C0}
In this subsection we prove uniform Lasota-Yorke inequalities for the discretized operator defined
$\PF_{\eta}=\Pi_{\eta} \PF\Pi_{\eta}$ that was defined in Subsection \ref{newproj}.

\begin{proposition}\label{prop:uniform_LY_C1C0}
Let $0<\eta<\eta_0$. Suppose that\footnote{If $\Theta\ge 1 $; i.e., the expansion of $T$ is 
not big enough, we use an iterate $T^k:=G$ of $T$ as in
Proposition \ref{Th:SwLY}, with an expansion factor that guarantees 
the corresponding $\Theta$ to be strictly smaller than 1. 
Note that, since $\mathcal L$ has a spectral gap on $C^i$, $i=1,2$, 
its iterate $\mathcal L_G$ will also have a spectral gap on $C^i$, $i=1,2$. 
In particular, 1 will still be a simple eigenvalue of $\mathcal L_G$ on  $C^i$, $i=1,2$.}
\[
\Theta:=\bigg(\frac{3}{2}+6\eta_0\bigg)(M+B)\lambda <1.
\]
For any $f\in C^1([0,1])$, and any $n\ge 1$, we have
\[
||\PF_{\eta}^n f||_{C^1}\leq (3+12\eta_0)\bigg(\Theta^n||f||_{C^1}+\frac{B(B+1)}{1-\Theta}||f||_1\bigg)+||f||_1,
\]
and
\[||\PF_{\eta}^n||_{C^1}\leq (3+12\eta_0)\bigg(\Theta^n+\frac{B(B+1)}{1-\Theta}+\frac{1}{3}\bigg).\]
\end{proposition}
\begin{proof}
We start by bounding, using the inequality proved in Proposition \ref{prop:C1L1}
\begin{align*}
||(\PF\Pi_\eta f)'||_{\infty}&\leq (M+B)\lambda ||(\Pi_\eta f)'||_{\infty}+B(B+1) ||\Pi_\eta f||_1\\
&\leq \bigg(\frac{3}{2}+6\eta\bigg)(M+B)\lambda ||f'||_{\infty}+B(B+1) ||f||_1.
\end{align*}
Since $||(\PF\Pi_\eta)^n f||_1=||f||_1$, we have that
\[
||((\PF\Pi_\eta)^n f)'||_{\infty}\leq \Theta^n||f'||_{\infty}+\frac{B(B+1)}{1-\Theta}||f||_1.
\]
Therefore,
\[
||({\PF}_{\eta}^n f)'||_{\infty}\leq \bigg(\frac{3}{2}+6\eta\bigg)\bigg(\Theta^n||f'||_{\infty}+\frac{B(B+1)}{1-\Theta}||f||_1\bigg).
\]
Since
\[
||f||_{\infty}\leq ||f||_{1}+\textrm{Var}(f)\leq ||f||_{1}+||f'||_{\infty}
\]
we have 
\[
||\PF_{\eta}^n f||_{C^1}\leq (3+12\eta)\bigg(\Theta^n||f'||_{\infty}+\frac{B(B+1)}{1-\Theta}||f||_1\bigg)+||f||_1.
\]
\end{proof}
\subsection{Approximating the invariant density in the ${C}^1$ norm}\label{subsec:approxLhatstochastic} 
In this subsection we provide a discretization scheme of the transfer operator $\mathcal L$  in order to approximate 
the invariant density of $T$ in the ${C}^1$ norm.

\subsubsection{\bf An approximation of $\mathcal L$ as an operator from $C^2\to C^1$.} \label{subsec:approxC2C1}
Let
\begin{equation*}
\phi(x)=\left\{ 
\begin{array}{cc}
1+10x^3+15x^4+6x^5 & x\in \lbrack -1,0] \\ 
1-10x^3+15x^4-6x^5 & x\in \lbrack 0,1]\\
0&\textrm{otherwise}
\end{array}%
\right.
\end{equation*}
and
\begin{equation*}
\nu(x)=\left\{ 
\begin{array}{cc}
x-6x^3-8x^4-3x^5 & x\in \lbrack -1,0] \\ 
x-6x^3+8x^4-3x^5 & x\in \lbrack 0,1]\\
0&\textrm{otherwise}
\end{array}%
\right..
\end{equation*}
Let $m\in \mathbb{N}$ and $\eta=1/m$. For $i=0,\dots,m$, let $a_{i}=i/m$, $\phi_i(x)=\phi(m\cdot x-i)$, $\nu_i(x)=\nu(m\cdot x-i)/m$.
Let $\delta_{ij}=1$ if $i=j$ and $\delta_{ij}=0$ if $i\neq j$. The following relations hold for all $i$ and $j$:
\begin{align*}
\phi_i(a_j)&=\delta_{ij},\quad \phi_i'(a_j)=0,\quad \phi_i''(a_j)=0,\\
\nu_i(a_j)&=0,\quad\nu_i'(a_j)=\delta_{ij},\quad \nu_i''(a_j)=0.
\end{align*}
Moreover, $\sum_{i=0}^m \phi_i(x)=1$; i.e., $\{\phi_i\}_{i=0}^m$ forms a partition of unity. Further, for $x\in[a_i,a_{i+1}]$ we have
\[
|\nu_i(x)|+|\nu_{i+1}(x)|\leq \frac{5}{16}\eta,\quad |\nu'_i(x)|+|\nu'_{i+1}(x)|\leq 1.
\] 
Furthermore, $\int_0^1 \nu_i(x)=0$ for all $i=1,\ldots, m-1$. In addition we have,
\begin{align}\label{eq:bounds}
||\phi_i||_{\infty}&=1,\quad ||\phi_i'||_{\infty}=\frac{15 m}{8},\quad ||\phi_i''||_{\infty}=\frac{10\sqrt{3}m^2}{3},\\
||\nu_i||_{\infty}&=\frac{16}{81m},\quad ||\nu_i'||_{\infty}=1,\quad ||\nu_i''||_{\infty}=\frac{8m}{225}(28+19\sqrt{19})\leq 4m.
\end{align}
Let 
\[
\kappa(x)=6x(1-x).
\]
Note that $\int_0^1\kappa(x)dx=1$.
Let
\begin{equation}\label{eq:pol_approx}
p(x)=\sum_{i=0}^m f(a_i)\phi_i(x)+f'(a_i)\nu_i(x).
\end{equation}
We define the operator 
\begin{equation*}
(\tilde{\Pi}_{\eta}f)(x):=p(x)+\bigg(\int_0^1 f dx-\int_0^1 p dx\bigg)\kappa(x).
\end{equation*}
For $f\in C^2([0,1])$, we prove in Lemma \ref{lemma:pi_tilda} estimates on the $C^i$, $i=0,1,2$, norms of  $\tilde\Pi_{\eta}f$. We first start with two preliminary lemmas.

\begin{lemma}\label{lemma:poly}
Let $f\in C^2([0,1])$, and let $p$ be as in \eqref{eq:pol_approx}.
\begin{enumerate}
 \item $||p||_{\infty}\leq ||f||_{\infty}+32/81 ||f'||_{\infty}\eta$,
 \item $||p'||_{\infty}\leq 23/8 ||f'||_{\infty}$,
 \item $||p''||_{\infty}\leq 4||f''||_{\infty}$.
\end{enumerate}
\end{lemma}
\begin{proof}
In this proof, we will denote by $p_i(x):=p(x)|_{[a_i,a_{i+1}]}$.
By construction we have 
\[
p_i(x)=f(a_i)\phi_i(x)+f(a_{i+1})\phi_{i+1}(x)+f'(a_i)\nu_i(x)+f'(a_{i+1})\nu_{i+1}(x).
\]
(1) follows by observing that:
\[
|p_i(x)|\leq ||f||_{\infty}|\phi_i(x)+\phi_{i+1}(x)|+||f'||_{\infty}(|\nu_i(x)|+|\nu_{i+1}(x)|).
\]
The proof of (2) relies on the fact that $(\phi_i(x)+\phi_{i+1}(x))'=0$, since the
$\phi_i$'s form a partition of the unity:
\begin{align*}
|p_i'(x)|\leq |f(a_{i+1})-f(a_i)| |\phi'_i(x)|+||f'||_{\infty}\\
\leq \frac{15}{8}\frac{|f(a_{i+1})-f(a_i)|}{\eta}+||f'||_{\infty}.
\end{align*}
We now prove (3). For $x\in[a_i,a_{i+1}]$ let 
\[
l_i(x):=f(a_{i})+\frac{f(a_i+1)-f(a_i)}{\eta}(x-a_i),\quad q_i(x):=p_i(x)-l_i(x);
\]
we have $p_i''(x)=q_i''(x)$, where $q_i(a_i)=q_i(a_{i+1})=0$.
By unicity of polynomial representation, we have
\[
q(x)=q'(a_i)\nu_i(x)+q'(a_{i+1})\nu_{i+1}(x),
\]
with 
\begin{align*}
q'(a_i)=f'(a_i)-\frac{f(a_i+1)-f(a_i)}{\eta},\quad q'(a_{i+1})=f'(a_{i+1})-\frac{f(a_i+1)-f(a_i)}{\eta},
\end{align*}
and 
\[
|q''(x)|\leq \frac{4}{\eta}(|q'(a_i)|+|q'(a_{i+1}|).
\]
In the last inequality we have used the fact that $|\nu''_i(x)|\leq 4/\eta$. Moreover,
\[
|q'(a_i)|=|f'(a_i)-\frac{f(a_i+1)-f(a_i)}{\eta}|=|f'(a_i)-f'(a_i)-f''(\xi)\eta/2|\leq \frac{||f''||_{\infty}\eta}{2},
\]
which proves (3).
\end{proof}
The following lemma provides bounds on the distance, in $C^i$, $i=0,1,2$, between $f$ and $p$. 
\begin{lemma}\label{lemma:difference}
Let $f\in C^2([0,1])$, and let $p$ be as in \eqref{eq:pol_approx}.
\begin{enumerate}
 \item $||f-p||_{\infty}\leq 15 ||f'||\eta/8$,
 \item $||f-p||_{\infty}\leq 3 ||f''||\eta^2/2$,
 \item $||f'-p'||_{\infty}\leq 3||f''||\eta$,
 \item $|\int_0^1 (f-p) dx|\leq 15 ||f'||\eta/8$,
 \item $|\int_0^1 (f-p) dx|\leq 3 ||f''||\eta^2/2$.
\end{enumerate}
\end{lemma}
\begin{proof}
Let $x \in [a_i,a_i+1]$, we have $ p_i(a_i)=f(a_i)$, $p'(a_i)=f'(a_i)$.
We first prove (1). There exist $\xi,\varsigma\in [a_i,a_{i+1}]$ such that:
\[
|f(x)-p(x)|=|(f'(\xi)-p'(\varsigma))(x-a_i)|\leq |||f'||_{\infty}-||p'||_{\infty}|\eta.
\]
Thus, (1) follows from (2) of Lemma \ref{lemma:poly}. We now prove (2). There exist $\xi,\varsigma\in [a_i,a_{i+1}]$ such that:
\[
|f(x)-p(x)|=|(f''(\xi)-p''(\varsigma))\frac{(x-a_i)^2}{2}|\leq \frac{|||f''||_{\infty}-||p''||_{\infty}|}{2}\eta^2.
\]
Thus, (2) follows from (3) of Lemma \ref{lemma:poly}. We now prove (3); there exist $\xi,\varsigma\in [a_i,a_{i+1}]$ such that:
\[
|f'(x)-p'(x)|=|(f''(\xi)-p''(\varsigma))(x-a_i)|\leq |||f''||_{\infty}-||p''||_{\infty}|\eta.
\]
Thus, (3) follows from (3) of Lemma \ref{lemma:poly}.
Items (4) and (5) follow trivially from item (1) and (2).
\end{proof}
We now obtain estimates on the $C^i$, $i=0,1,2$ norms of $\tilde{\Pi_\eta}f$. First we need some notation that will be used in the remaining lemmas of this subsection. Fix $\eta_0>0$ and define the following constants
$$A_1:=\frac{23}{8}+\frac{32}{81}\eta_0;\quad A_2:= 4+\frac{32}{81}\eta+18\eta^2_0;\quad A_3:=\frac{9}{4}+9.$$
\begin{lemma}\label{lemma:pi_tilda}
Let $f\in C^2([0,1])$, then
\begin{enumerate}
 \item $||\tilde{\Pi}_\eta f||_{\infty}\leq ||f||_{\infty}+32/81 ||f'||_{\infty}\eta+45 ||f'||\eta/16$,
 \item $||\tilde{\Pi}_\eta f||_{\infty}\leq ||f||_{\infty}+32/81 ||f'||_{\infty}\eta+9 ||f''||\eta^2/4$,
 \item $||(\tilde{\Pi}_\eta f)'||_{\infty}\leq 23/8 ||f'||_{\infty}+45 ||f'||\eta/4$,
 \item $||(\tilde{\Pi}_\eta f)'||_{\infty}\leq 23/8 ||f'||_{\infty}+9 ||f''||\eta^2$,
 \item $||(\tilde{\Pi}_\eta f)''||_{\infty}\leq 4||f''||_{\infty}+18 ||f''||\eta^2$.
\end{enumerate}
Moreover, for all $\eta<\eta_0$ we have
\[
||\tilde{\Pi}_{\eta}f||_{C^2}\leq A_{2}||f||_{C^2},\quad ||\tilde{\Pi}_{\eta}f||_{C^1}\leq A_{1}||f||_{C^1}+A_{3}\eta^2 ||f||_{C^2}. 
\]
\end{lemma}
\begin{proof}
By definition
\[
\tilde{\Pi}_\eta f=p(x)+\int_0^1 (f-p) dx \cdot \kappa(x).
\]
We have $||\kappa||_{\infty}= 3/2$, $||\kappa'||_{\infty}=6$, $||\kappa''||_{\infty}=12$.
Consequently, all the items  in the lemma follow from Lemma \ref{lemma:poly} and items (4) and (5)
of Lemma \ref{lemma:difference}.

Finally, we have:
\begin{align*}
 ||\tilde{\Pi}_\eta f||_{C^2}&\leq 4||f||_{C^2}+\frac{32}{81}\eta||f'||_{\infty}+18||f''||\eta^2\leq (4+\frac{32}{81}\eta+18\eta^2)||f||_{C^2};\\
 ||\tilde{\Pi}_\eta f||_{C^1}&\leq \bigg(\frac{23}{8}+\frac{32}{81}\eta\bigg)||f||_{C^1}+\bigg(\frac{9}{4}+9\bigg)||f||_{C^2}\eta^2,
\end{align*}
which completes the proof of the lemma.
\end{proof}

\subsection{Uniform Lasota-Yorke inequality for $\tilde{\PF}_{\eta}$}\label{subsec:uniform_LY_C2C1}
We define now the discretized operator
\[
\tilde{\PF}_{\eta}:=\tilde{\Pi}_\eta\PF\tilde{\Pi}_\eta.
\]
In this subsection we prove uniform Lasota-Yorke inequalities for the discretized operator $\tilde{\PF}_{\eta}$.

\begin{proposition}\label{prop:uniform_bound_C1_tilde}
Let $0<\eta<\eta_0$. Suppose that
\[
\Theta:=(M+B)\lambda (\frac{23}{8}+\frac{45}{4}\eta_0) <1.
\]
For any $n\ge 1$ we have
\[
||(\PF\tilde{\Pi}_{\eta})^n||_{C^1}\leq 2\Theta^n+\frac{B(B+1)}{1-\Theta}+1:=\tilde{K}
\]
and
\[
||\tilde{\PF}_{\eta}^n||_{C^1}\leq 2(\frac{23}{8}+\frac{45}{4}\eta_0)\bigg(\Theta+\frac{B(B+1)}{1-\Theta}+\frac{4}{23}\bigg):= \tilde{M}.
\]
\end{proposition}
\begin{proof}
The proof is identical to the proof of Proposition \ref{prop:uniform_LY_C1C0}, using the fact
that $||(\tilde{\Pi}_\eta f)'||_{\infty}\leq (\frac{23}{8}+\frac{45}{4}\eta)||f'||_{\infty}$, which was proved in Lemma \ref{lemma:pi_tilda}.
\end{proof}

\begin{lemma}\label{prop:uniform_LY_C2C1}
Let $0<\eta<\eta_0$. Suppose that
\[
\tilde{\Theta}:= A_{2} \cdot M\lambda^2+A_{3} \eta_0^2 <1.
\]
For $f\in C^2([0,1])$ and any $n\ge 1$ we have
\[
||\tilde{\PF}_{\eta}^n f||_{C^2}\leq A_{2}\tilde{\Theta}^n ||f||_{C^2}+\frac{A_{2}D A_{1}\tilde{M}}{1-\tilde{\Theta}}||f||_{C^1},
\]
where $\tilde{M}:=2(\frac{23}{8}+\frac{45}{4}\eta_0)\bigg(\Theta+\frac{B(B+1)}{1-\Theta}+\frac{4}{23}\bigg)$.
\end{lemma}
\begin{proof}
We bound
\begin{align*}
||\PF\tilde\Pi_\eta f||_{C^2}&\leq M\lambda^2 ||\tilde\Pi_\eta f||_{C^2}+D||\tilde\Pi_\eta f||_{C^1}\\
&\leq (A_{2} \cdot M\lambda^2+A_{3} \eta^2_0) ||f||_{C^2}+D\cdot A_{1}||f||_{C^1}.
\end{align*}
Then
\[
||(\PF\tilde{\Pi}_\eta)^n f||_{C^2}\leq \tilde{\Theta}^n ||f||_{C^2}+\frac{D\cdot A_{1}\tilde{M}}{1-\tilde{\Theta}}||f||_{C^1},
\]
and
\[
||\tilde{\PF}^n f||_{C^2}\leq A_{2}\tilde{\Theta}^n ||f||_{C^2}+\frac{A_{2}D A_{1}\tilde{M}}{1-\tilde{\Theta}}||f||_{C^1}.
\]
\end{proof}

\subsection{Some approximation inequalities}
In this section we show how to control the error made in iterating the discretized transfer 
operator instead of the transfer operator, under the assumption that the dynamics
satisfies a Lasota-Yorke inequality.

\begin{lemma}\label{18}
Suppose there are two norms $||~||_{s}\geq ||~||_{w}$, such
that $\forall f\in B_s,\forall n\geq 1$%
\begin{equation}
||\mathcal{L}^{n}f||_{s}\leq A\lambda _{1}^{n}||f||_{s}+B||f||_{w}.
\end{equation}%
Let  $\pi _{\delta }$ be a finite rank operator satisfying:
\begin{itemize}
\item $\mathcal{L}_{\delta }=\pi _{\delta }\mathcal{L}\pi _{\delta }$ with $%
||\pi _{\delta }f-f||_{w}\leq K \delta ||f||_{s};$

\item $\pi _{\delta }$, $\mathcal{L}^{i}$ and $\mathcal{L}_{\delta }^{i}$
are bounded for the norm $||~||_{w}:$ $||\pi _{\delta }||_{w}\leq P$ and $%
\forall i>0$, $||\mathcal{L}^{i}||_{w}\leq M$, $||\mathcal{L}_{\delta}^{i}|_{V_0}||_{w}\leq C_i$, $i=0,\ldots,N$. 
\end{itemize}
Then
\begin{eqnarray*}
||(\mathcal{L}^n-\mathcal{L}^n_{\delta })f||_{w} &\leq &K\delta\sum_{k=1}^{n}A\lambda
_{1}^{k-1}C_{n-k}(A\lambda _{1}+PM)||f||_{s}\\&+&K\delta\sum_{k=1}^{n}C_{n-k}(A\lambda _{1}+PM+M)B||f||_{w}.
\end{eqnarray*}
\end{lemma}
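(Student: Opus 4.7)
The plan is to mimic the telescoping argument already used in the preceding lemma but, at each stage of the telescope, replace the coarse bound $\|\mathcal{L}_\delta^i\|_w\le M_\delta$ with the sharper per-iterate bound $\|\mathcal{L}_\delta^i|_{V_0}\|_w\le C_i$. The legitimacy of passing to the invariant subspace $V_0$ (zero-average functions) at each stage hinges on observing that $(\mathcal{L}_\delta-\mathcal{L})g\in V_0$ for every $g\in\mathcal{B}$; this is the only new ingredient over Lemma \ref{lemma:approximation}.

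First I would write the standard telescoping identity
\[
\mathcal{L}_\delta^n-\mathcal{L}^n=\sum_{k=1}^{n}\mathcal{L}_\delta^{n-k}(\mathcal{L}_\delta-\mathcal{L})\mathcal{L}^{k-1},
\]
and set $g_k:=(\mathcal{L}_\delta-\mathcal{L})\mathcal{L}^{k-1}f$. Since $\mathcal{L}$ preserves integrals (it sends probability densities to probability densities) and $\pi_\delta$ preserves integrals in the discretizations used in the paper (see item (6) of Lemma \ref{it:C2contr0} and the definition of $\Pi_\eta$ in \eqref{myop}), the composition $\mathcal{L}_\delta=\pi_\delta\mathcal{L}\pi_\delta$ also preserves integrals, so $g_k\in V_0$. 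Consequently
\[
\|\mathcal{L}_\delta^{n-k}g_k\|_w\le C_{n-k}\,\|g_k\|_w.
\]

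Next, I would bound $\|g_k\|_w$ by reusing the single-step argument from Lemma \ref{lemma:approximation}: via the decomposition
\[
(\mathcal{L}_\delta-\mathcal{L})h=\pi_\delta\mathcal{L}(\pi_\delta-\mathrm{Id})h+(\pi_\delta-\mathrm{Id})\mathcal{L}h,
\]
together with $\|(\pi_\delta-\mathrm{Id})v\|_w\le K\delta\|v\|_s$, $\|\pi_\delta\|_w\le P$, $\|\mathcal{L}\|_w\le M$ and the Lasota–Yorke inequality \eqref{112}, one obtains
\[
\|(\mathcal{L}_\delta-\mathcal{L})h\|_w\le K\delta\bigl[(A\lambda_1+PM)\|h\|_s+B\|h\|_w\bigr].
\]
Substituting $h=\mathcal{L}^{k-1}f$ and using $\|\mathcal{L}^{k-1}f\|_s\le A\lambda_1^{k-1}\|f\|_s+B\|f\|_w$ and $\|\mathcal{L}^{k-1}f\|_w\le M\|f\|_w$ yields
\[
\|g_k\|_w\le K\delta\Bigl[A\lambda_1^{k-1}(A\lambda_1+PM)\|f\|_s+B(A\lambda_1+PM+M)\|f\|_w\Bigr].
\]

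Finally, multiplying by $C_{n-k}$, summing over $k=1,\dots,n$, and applying the triangle inequality to the telescoped expression gives the claimed bound (the only visible discrepancy with the stated display is that the constants involving $P$ naturally appear as $PM$, the $M$ factor coming from $\|\pi_\delta\mathcal{L}\|_w\le PM$). The argument is essentially bookkeeping; there is no serious obstacle, but care should be taken to verify the integral-preservation of $\pi_\delta$ in each discretization used, as this is the mechanism that unlocks the sharper constants $C_i$ in place of $M_\delta$.
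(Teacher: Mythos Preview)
Your proposal is correct and follows exactly the approach the paper intends: the paper does not actually write out a proof of Lemma~\ref{18} but simply states ``with a similar proof it is also possible to have a more precise estimate (see \cite{GaMonNiPol} for details)'', so the telescoping argument of Lemma~\ref{lemma:approximation} with $M_\delta$ replaced by $C_{n-k}$ is precisely what is meant. You have correctly isolated the only new ingredient, namely that $(\mathcal{L}_\delta-\mathcal{L})g\in V_0$ because both $\mathcal{L}$ and $\pi_\delta$ preserve integrals, and you also correctly flag that the natural computation produces $PM$ rather than the $P$ appearing in the displayed statement.
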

\begin{proof}
We have 
\begin{equation*}
||(\PF-\mathcal L_{\delta })f||_{w}\leq ||\pi _{\delta }\PF\pi _{\delta }f-\pi%
_{\delta }\PF f||_{w}+||\pi_{\delta }\PF f- \PF f||_{w}.
\end{equation*}%
Since
\begin{equation*}
\pi_{\delta }\PF\pi _{\delta }f-\pi_{\delta }\PF f=%
\pi_{\delta }\PF(\pi _{\delta }f-f),
\end{equation*}%
and $||\pi _{\delta }f-f||_{w}\leq K\delta ||f||_{s}$, we have
\begin{equation*}
||\pi_{\delta }\PF (\pi _{\delta }f-f)||_{w}\leq PM||\pi _{\delta
}f-f||_{w}\leq PMK\delta ||f||_{s}.
\end{equation*}
On the other hand
\begin{equation*}
||\pi_{\delta }\PF f-\PF f||_{w}\leq K\delta ||\PF f||_{s}\leq K\delta
(A\lambda _{1}||f||_{s}+B||f||_{w})
\end{equation*}
which gives 
\begin{equation}
||(\PF -\PF_{\delta })f||_{w}\leq K\delta (A\lambda _{1}+PM)||f||_{s}+K\delta
B||f||_{w}
\end{equation}

Now let us consider $( \PF_{\delta }^{n}-\PF^{n})f$. We have
\begin{eqnarray*}
||(\PF_{\delta }^{n}-\PF^{n})f||_{w} &\leq& \sum_{k=1}^{n}|| \PF_{\delta
}^{n-k}(\PF_{\delta }-\PF)\PF^{k-1}f||_{w}\leq \sum_{k=1}^{n}C_{n-k}||( \PF_{\delta
}-\PF) \PF^{k-1}f||_{w} \\
&\leq& K\delta \sum_{k=1}^{n}C_{n-k}(A\lambda
_{1}+PM)||\PF^{k-1}f||_{s}+C_{n-k}B||\PF^{k-1}f||_{w} \\
&\leq& K\delta \sum_{k=1}^{n}C_{n-k}\bigg((A\lambda _{1}+PM)(A\lambda
_{1}^{k-1}||f||_{s}+B||f||_{w})+BM||f||_{w}\bigg).
\end{eqnarray*}
We will now collect the terms in front of $||f||_s$:
\[
K\delta \sum_{k=1}^{n}A\lambda
_{1}^{k-1}C_{n-k}(A\lambda _{1}+PM)||f||_{s},
\]
and the terms in front of $||f||_w$:
\[
K\delta\sum_{k=1}^{n}C_{n-k}(A\lambda _{1}+PM+M)B||f||_{w}.
\]
\end{proof}

In the case where $f$ is a fixed point of $\mathcal L$ we have the following estimate:

\begin{lemma}\label{lemma:distance_on_fixed_point}
Suppose there are two norms $||~||_{s}\geq ||~||_{w}$, such
that $\forall f\in B_s,\forall n\geq 1$%
\begin{equation}
||\mathcal{\mathcal L}^{n}f||_{s}\leq A\lambda _{1}^{n}||f||_{s}+B||f||_{w}.
\end{equation}
Let  $\pi _{\delta }$ be a finite rank operator satisfying:
\begin{itemize}
\item $\mathcal{\mathcal L}_{\delta }=\pi _{\delta }\mathcal{\mathcal L}\pi _{\delta }$ with $%
||\pi _{\delta }f-f||_{w}\leq K \delta ||f||_{s}$
\item $\pi _{\delta }$, $\mathcal{\mathcal L}^{i}$ and $\mathcal{\mathcal L}_{\delta }^{i}$
are bounded for the norm $||~||_{w}:$ $||\pi _{\delta }||_{w}\leq P$ and $%
\forall i>0$, $||\mathcal{\mathcal L}^{i}||_{w}\leq M$.
\end{itemize}
Then if $f$ is a fixed point of $\mathcal{\mathcal L}$, we have
\[||\mathcal Lf-\mathcal L_{\delta}f||\leq K\delta (1+PM)||f||_s.\]
\end{lemma}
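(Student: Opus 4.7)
The plan is to exploit that $f=\mathcal L f$ to avoid invoking the Lasota--Yorke inequality at all, which is why the constant here is so much sharper than in Lemma \ref{lemma:approximation} or Lemma \ref{18}. I would begin from the telescopic decomposition
\[
\mathcal L f - \mathcal L_{\delta} f \;=\; \mathcal L f - \pi_{\delta}\mathcal L\pi_{\delta} f \;=\; (\mathcal L f - \pi_{\delta}\mathcal L f) \;+\; \pi_{\delta}\mathcal L(f - \pi_{\delta} f),
\]
obtained by adding and subtracting $\pi_{\delta}\mathcal L f$.

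The first summand is trivialized by the hypothesis that $f$ is a fixed point: since $\mathcal L f = f$, it equals $f - \pi_{\delta} f$, whose weak norm is bounded by $K\delta\|f\|_s$ directly from the first bullet in the assumptions on $\pi_{\delta}$. The second summand is handled by chaining the three weak operator bounds: $\|\pi_{\delta}\|_w \le P$, $\|\mathcal L\|_w \le M$ (the $i=1$ case of the second bullet), and then once more the approximation bound $\|f - \pi_{\delta} f\|_w \le K\delta\|f\|_s$, giving $\|\pi_{\delta}\mathcal L(f - \pi_{\delta} f)\|_w \le PM\cdot K\delta\|f\|_s$.

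Adding the two estimates yields $\|\mathcal L f - \mathcal L_{\delta} f\|_w \le K\delta(1 + PM)\|f\|_s$, which is the claimed inequality. There is no real obstacle here; the only subtlety worth flagging is conceptual rather than technical, namely that the fixed-point hypothesis replaces what would otherwise be a Lasota--Yorke bound on $\|\mathcal L f\|_s$ by the identity $\mathcal L f = f$, so the strong norm of $f$ appears only once (from the approximation of $f$ itself), eliminating both the $A\lambda_1$ factor and the $B\|f\|_w$ remainder that appear in the analogous estimates.
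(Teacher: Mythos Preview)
Your proof is correct and essentially identical to the paper's: both add and subtract $\pi_{\delta}\mathcal L f$, use $\mathcal L f=f$ to reduce the first term to $f-\pi_{\delta}f$, and then apply $\|\pi_{\delta}\|_w\le P$ and $\|\mathcal L\|_w\le M$ to the second. Your accompanying commentary on why the fixed-point hypothesis sharpens the constant relative to Lemma~\ref{lemma:approximation} is also accurate.
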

\begin{proof}
The proof is almost identical to the one above:
\[||\mathcal Lf-\mathcal L_{\delta}f||_w\leq ||\mathcal Lf-\pi_{\delta}\mathcal Lf||_w+||\pi_{\delta}\mathcal Lf-\pi_{\delta}\mathcal L\pi_{\delta} f||_w,\]
since $f$ is fixed point:
\begin{align*}
||\mathcal Lf-\mathcal L_{\delta}f||_w &\leq ||f-\pi_{\delta}f||_w+||\pi_{\delta}\mathcal Lf-\pi_{\delta}\mathcal L\pi_{\delta} f||_w\\ 
&\leq K\delta ||f||_s+P||\mathcal Lf-\mathcal L\pi_{\delta} f||_w\\
&\leq K\delta ||f||_s+PM||f-\pi_{\delta}f||_w\\
&\leq K\delta ||f||_s+PMK\delta ||f||_s.
\end{align*}
\end{proof}

 \subsection{ A recursive convergence to equilibrium estimation for maps
satisfying a Lasota-Yorke inequality\label{sec:GNS}}

Here we recall an algorithm introduced in \cite{GNS} to compute the convergence to equilibrium of a measure preserving system satisfying a Lasota-Yorke inequality. We will see how, the Lasota-Yorke inequality together with a
suitable approximation of the transfer operator by a finite dimensional operator can be used to deduce finite time and asymptotic upper bounds on the contraction of the zero average space.

\bigskip

Consider two vector subspaces of the space of signed measures $B_{s}\subseteq B_{w}$ with norms $||~||_{s}\geq ||~||_{w}$,
suppose that the transfer operator $\mathcal{L}$ is
such that $\forall f\in B_s,\forall n\geq 1$%
\begin{equation}\label{1}
||\mathcal{L}^{n}f||_{s}\leq A\lambda _{1}^{n}||f||_{s}+B||f||_{w}.
\end{equation}%

Let  $\pi _{\delta }$ be a finite rank operator satisfying:
\begin{itemize}
\item $\mathcal{L}_{\delta }=\pi _{\delta }\mathcal{L}\pi _{\delta }$ with $%
||\pi _{\delta }f-f||_{w}\leq K \delta ||f||_{s};$
\item $\pi _{\delta }$, $\mathcal{L}^{i}$ and $\mathcal{L}_{\delta }^{i}$
are bounded for the norm $||~||_{w}:$ $||\pi _{\delta }||_{w}\leq P$ and $%
\forall i>0$, $||\mathcal{L}^{i}||_{w}\leq M$, $||\mathcal{L}_{\delta}^{i}|_{V_0}||_{w}\leq \tilde{C}_i$, $i=0,\ldots,N$. 
\end{itemize}

Then by Lemma \ref{18} there exist $C(\delta,n), D(\delta,n)$ depending
only on $\delta$ and $n$, such that
\begin{equation}
||(\mathcal{L}_{\delta }^{n}-\mathcal{L}^{n})g||_{w}\leq 
C(\delta,n)||g||_{s}+D(\delta,n)||g||_{w}.  \label{2}
\end{equation}

Suppose now that there exists an $n_1$ such that $||\mathcal{L}_{\delta}^{n_1}|_{V_0}||_{w}\leq \tilde{C}_{n_1}<1$;
from now on, we will denote $\lambda_2=\tilde{C}_{n_1}< 1$.

Let us consider a starting measure: $g_{0}\in V_0$, let us denote $%
g_{i+1}=\mathcal L^{n_{1}}g_{i}.$ If the system is as above, putting together the Lasota-Yorke inequality%
\eqref{1} and the approximation inequality \eqref{2}
\begin{equation}\label{2'}
\left\{ 
\begin{array}{c}
||\mathcal{L}^{n_{1}}g_{i}||_{s}\leq A\lambda
_{1}^{n_{1}}||g_{i}||_{s}+B||g_{i}||_{w} \\ 
||\mathcal{L}^{n_{1}}g_{i}||_{w}\leq \lambda _{2}||g_{i}||_{w}+C(\delta,n_1)||g_{i}||_{s}+D(\delta,n_1)||g_{i}||_{w}%
\end{array}%
\right. .
\end{equation}%
Writing \eqref{2'}  in a vector notation:%
\begin{equation}
\left( 
\begin{array}{c}
||g_{i+1}||_{s} \\ 
||g_{i+1}||_{w}%
\end{array}%
\right) \preceq \left( 
\begin{array}{cc}
A\lambda _{1}^{n_{1}} & B \\ 
C(\delta,n_1) & D(\delta,n_1)+\lambda _{2}%
\end{array}%
\right) \left( 
\begin{array}{c}
||g_{i}||_{s} \\ 
||g_{i}||_{w}%
\end{array}%
\right)  \label{4}
\end{equation}%
where $\preceq $ indicates the component-wise $\leq $ relation (both
coordinates are less or equal). The relation $\preceq $ can be used because
the matrix is positive. The relation \eqref{4} and the assumptions allow to 
estimate explicitly the contraction rate, by approximating the matrix and its iterations. 
Let 
\[
\mathcal{M}=\left( 
\begin{array}{cc}
A\lambda _{1}^{n_{1}} & B \\ 
C(\delta,n_1) & D(\delta,n_1)+\lambda _{2}%
\end{array}%
\right) .
\]
Consequently, we can bound $||g_{i}||_{s}$ and $%
||g_{i}||_{w} $ by a sequence 
\begin{equation*}
\left( 
\begin{array}{c}
||g_{i}||_{s} \\ 
||g_{i}||_{w}%
\end{array}%
\right) \preceq \mathcal{M}^{i}\left( 
\begin{array}{c}
||g_{0}||_{s} \\ 
||g_{0}||_{w}%
\end{array}%
\right)
\end{equation*}
which can be computed explicitly. This gives an explicit estimate on the speed of convergence for the norms $||\ ||$ and $||\ ||_{w}$ at a given time.

We need an asymptotic estimation as the one given in (\ref{unicontr}) and
in particular an estimation for $C_{1}$ and $\rho .$ This can be done using the
eigenvalues and eigenvectors of $\mathcal{M}.$

Indeed, let the leading eigenvalue be denoted by $\rho _{\mathcal{M}}$ and a
left positive eigenvector $(a,b)$, such that $a+b=1$. For each pair of
values by $(a,b)$ such that $a+b=1$. We can define a norm 
\begin{equation*}
||g||_{(a,b)}=a||g||_{s}+b||g||_{w}.
\end{equation*}
We have  
\begin{equation*}
||\mathcal{L}g||_{(a,b)}=a||\mathcal{L}g||_{s}+b||\mathcal{L}g||_{w}\leq
(a,b)\cdot \mathcal{M}\cdot \left( 
\begin{array}{c}
||g||_{s} \\ 
||g||_{w}%
\end{array}%
\right).
\end{equation*}
Then
\begin{equation*}
||\mathcal{L}^{kn_{1}}g||_{(a,b)}\leq \rho _{\mathcal{M}}^{k}||g||_{(a,b)}.
\end{equation*}
By estimating $\rho _{\mathcal{M}}$\ and the vector $(a,b)$ we can
have upper estimates on $C_{1}$ and $\rho.$

\end{document}